\title{
Threshold phenomena for random discrete structures
}
\author{
  Jinyoung Park
  \affil{
    The author will be an assistant professor of mathematics at Courant Institute of Mathematical Sciences, NYU, starting in Fall 2023. Her email address is jinyoungpark@nyu.edu. Her research is supported by NSF grant DMS-2153844.
    }
}
\theoremstyle{plain}
\newtheorem{theorem}{Theorem}[section]
\newtheorem{prop}[theorem]{Proposition}
\newtheorem{conjecture}[theorem]{Conjecture}
\newtheorem{mydef}[theorem]{Definition}
\newtheorem{question}[theorem]{Question}
\theoremstyle{remark}
\newtheorem{example}[theorem]{Example}
\newtheorem{remark}[theorem]{Remark}
\newcommand{\pr}{\mathbb{P}}
\newcommand{\sub}[0]{\subseteq}
\newcommand{\beq}[1]{\begin{equation}\label{#1}}
\newcommand{\enq}[0]{\end{equation}}
\newcommand{\pE}[0]{p_{\mathbb E}}
\newcommand{\ra}[0]{\rightarrow}
\newcommand{\eps}[0]{\varepsilon }
\newcommand{\nin}[0]{\noindent}
\newcommand{\cF}{\mathcal{F} }
\newcommand{\cG}{\mathcal{G} }
\begin{document}

\maketitle

\section{Erd\H{o}s-R\'enyi model}\label{sec.intro}

To begin, we briefly introduce a model of random graphs. Recall that a graph is a mathematical structure that consists of vertices (nodes) and edges. 

\begin{figure}[h!]\centering\includegraphics[height=1in]{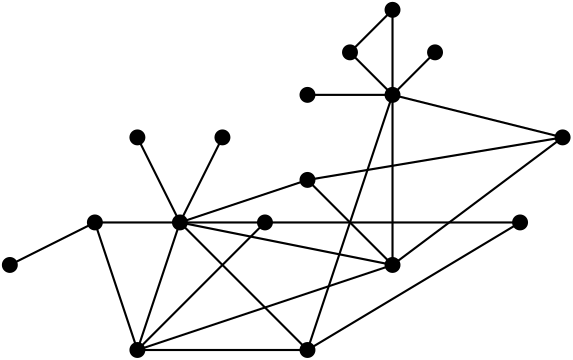}\caption{A graph}
\end{figure}

Roughly speaking, a random graph in this article means that, given a vertex set, the existence of each potential edge is decided at random. We will specifically focus on the \textit{Erd\H{o}s-R\'enyi random graph} (denoted by $G_{n,p}$), which is defined as follows.

Consider $n$ vertices that are labelled from 1 to $n$. 

\begin{figure}[h!]\centering\includegraphics[height=1.3in]{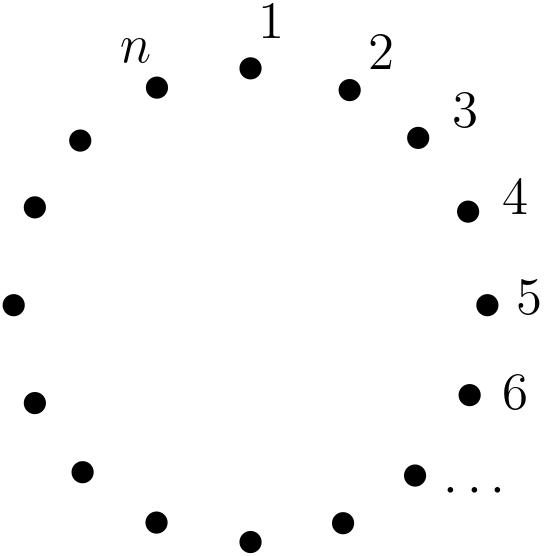}\end{figure}

\nin Observe that on those $n$ vertices, there are potentially ${n \choose 2}$ edges, that is, the edges labelled $\{1,2\},\{1,3\}, \ldots, \{n-1, n\}$. Given a probability $p \in [0,1]$, include each of the $n \choose 2$ potential edges with probability $p$, where the choice of each edge is made independently from the choices of the other edges.

\begin{example}\label{ex1} As a toy example of the Erd\H{o}s-R\'enyi random graph, let's think about what $G_{n,p}$ looks like when $n=3$ and the value of $p$ varies. First, if $p=1/2$, then $G_{n,p}$ has the probability distribution as in Figure \ref{fig:Gnp_unif} defined on the collection of eight graphs. Observe that each graph is equally likely (since each potential edge is present with probability $1/2$ independently).

\begin{figure}[h!]\centering \includegraphics[height=1.8in]{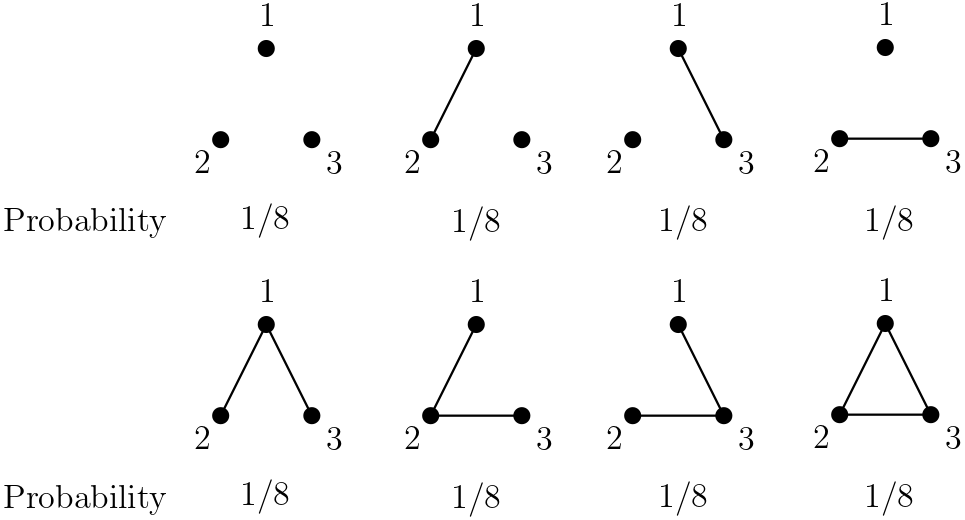}\caption{$G_{3,1/2}$}\label{fig:Gnp_unif}\end{figure}

Of course, we will have a different probability distribution if we change the value of $p$. For example, if $p$ is closer to 0, say 0.01, then $G_{n,p}$ has the distribution as in Figure \ref{fig:Gnp_sparse}, where sparser graphs are more likely (as expected). On the other hand, if $p$ is closer to 1, then denser graphs will be more likely.

\begin{figure}[h!]\centering \includegraphics[height=1.8in]{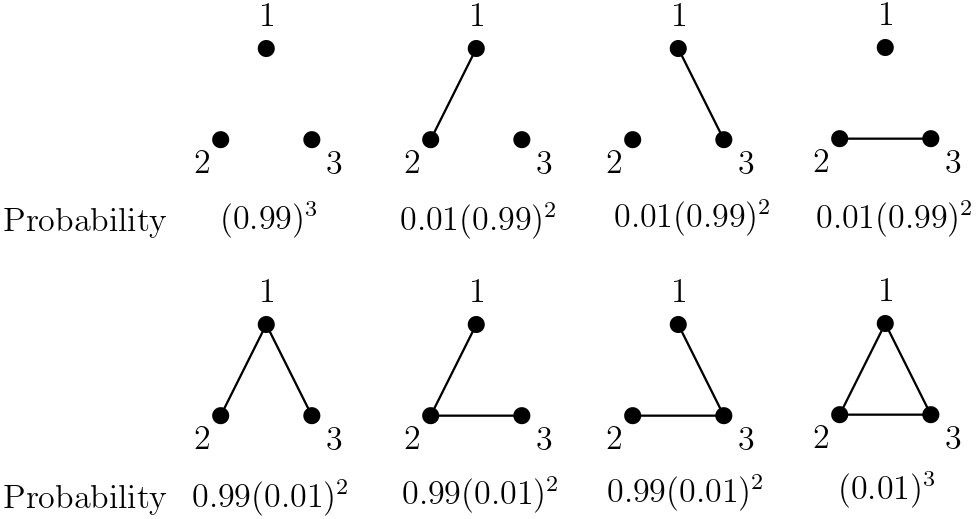}\caption{$G_{3,0.01}$}\label{fig:Gnp_sparse}\end{figure}

\end{example}

In reality, when we consider $G_{n,p}$, $n$ is a large (yet finite) number that tends to infinity, and $p=p(n)$ is usually a function of $n$ that tends to zero as $n \ra \infty$. For example, $p=1/n$, $p=\log n /n$, etc.

As we saw in Example \ref{ex1}, a random graph is a random variable with a certain probability distribution (as opposed to a fixed graph) that depends on the values of $n$ and $p$. Assuming $n$ is given, the structure of $G_{n,p}$ changes as the value of $p$ changes, and in order to understand $G_{n,p}$, we ask questions about the structure of $G_{n,p}$ such as
\[\text{What's the probability that $G_{n,p}$ is connected?}\]
or
\[\text{What's the probability that $G_{n,p}$ is planar?}\]
Basically, for \textit{any} property $\cF (=\cF_n)$ of interest, we can ask
\[\text{What's the probability that $G_{n,p}$ satisfies property $\cF$?}\]
\nin In those questions, usually we are interested in understanding the \textit{typical} structure/behavior of $G_{n,p}$. Observe that, unless $p=0$ or $1$, there is always a positive probability that all of the edges in $G_{n,p}$ are absent, or all of them are present (see Examples \ref{ex2}, \ref{ex3}). But in this article, we would rather ignore such extreme events that happen with a tiny probability, and focus on properties that $G_{n,p}$ possesses with a probability close to 1.

We often use languages and tools from probability theory to describe/understand behaviors of $G_{n,p}$. Below we discuss some very basic examples.

 We will write $f(n) \ll g(n)$ if $\frac{f(n)}{g(n)} \ra 0$ as $n \ra \infty$.

\begin{example}\label{ex2} 

One important object in graph theory is the \textit{complete graph}, a graph with all the potential edges present. The complete graph on $n$ vertices is denoted by $K_n$.

\begin{figure}[h!]\centering \includegraphics[height=.7in]{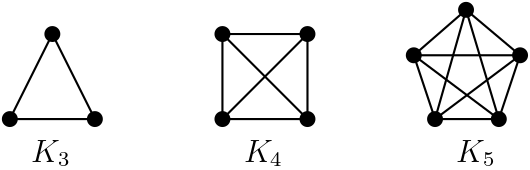}
\end{figure}

\nin We can easily imagine that, unless $p$ is very close to $1$, it is extremely unlikely that $G_{n,p}$ is complete. Indeed,
\[\pr(G_{n,p}=K_n)=p^{n \choose 2}\]
(since we want all the edges present), which tends to 0 unless $1-p$ is of order at most $n^{-2}$.

\end{example}

\begin{example}\label{ex3}
Similarly, we can compute the probability that $G_{n,p}$ is "empty" (let's denote this by $\emptyset$) meaning that no edges are present.\footnote{By the definition, $G_{n,p}$ has $n$ vertices as a default.} The probability for this event is
\[\pr(G_{n,p}=\emptyset)=(1-p)^{n \choose 2}.\] 
When $p$ is small, $1-p$ is approximately $e^{-p}$, so the above computation tells us that
\[\pr(G_{n,p}=\emptyset)\ra\begin{cases}
0 & \text{ if } p\gg 1/n^2; \\
1 & \text{ if } p\ll 1/n^2.
\end{cases}\]

\end{example}

\begin{example}\label{ex.edges}
How many edges does $G_{n,p}$ typically have? The natural first step to answer this question is computing the \textit{expected} number of edges in $G_{n,p}$. Using \textit{linearity of expectation},
    \[\begin{split}
       & \mathbb E[\text{number of edges in $G_{n,p}$}] \\
&=\sum_{i < j} \pr(\text{edge $\{i,j\}$ is present in $G_{n,p}$})\\
&= (\text{number of edges in $K_n$}) \times \pr(\text{each edge is present})\\
        &={n \choose 2} p.
    \end{split}\]
\end{example}

\begin{remark} For example, if $p=1/n$, then the expected number of edges in $G_{n,p}$ is $\frac{n-1}{2}$. But does this really imply that $G_{n,1/n}$ \emph{typically} has about $\frac{n-1}{2}$ edges? The answer to this question is related to the fascinating topic of "concentration of a probability measure." We will very briefly discuss this topic in Example \ref{ex.triangle}.
\end{remark}

\begin{example}\label{ex.triangles}

 Similarly, we can compute the expected number of \textit{triangles} (the complete graph $K_3$) in $G_{n,p}$.
    \[\begin{split}
      &  \mathbb E[\text{number of triangles in $G_{n,p}$}] \\
& =\sum_{i<j<k} \pr(\text{triangle $\{i,j,k\}$ is present in $G_{n,p}$})\\
&= (\text{number of triangles in $K_n$}) \times \pr(\text{each triangle is present})\\
        &={n \choose 3} p^3.
    \end{split}\]

\nin The above computation tells us that
\[\mathbb E[\text{number of triangles in $G_{n,p}$}]\ra\begin{cases}
0 & \text{ if } p\ll 1/n; \\
\infty & \text{ if } p\gg 1/n,
\end{cases}\]
from which we can conclude that $G_{n,p}$ is typically triangle-free if $p \ll 1/n$. (If the expectation tends to 0, then there is little chance that $G_{n,p}$ contains a triangle.)
\end{example}

\begin{remark}\label{rmk.1st} On the contrary, we \textit{cannot} conclude that $G_{n,p}$ \emph{typically} contains many triangles for $p \gg 1/n$ from the above expectation computation. Just think about a lottery of the prize money $10^{1000}$ dollars with the chance of winning $10^{-100}$, to see that a large expectation does not necessarily imply a high chance of the occurence of an event. In general, showing that a desired structure typically \textit{exists} in $G_{n,p}$ is a very challenging task, and this became a motivation for the \textit{Kahn--Kalai Conjecture} that we will discuss in the latter sections.
\end{remark}

\section{Threshold phenomena}\label{sec.TP}

One striking thing about $G_{n,p}$ is that appearance and disappearance of certain properties are abrupt. Probably one of the most well-known examples that exhibit threshold phenomena of $G_{n,p}$ is appearance of the \textit{giant component.} A \textit{component} of a graph is a maximal connected subgraph. For example, the graph in Figure \ref{fig.component} consists of four components, and the size (the number of vertices) of each component is $1, 2, 6,$ and $8$.

\begin{figure}[h!]\centering \includegraphics[height=1.2in]{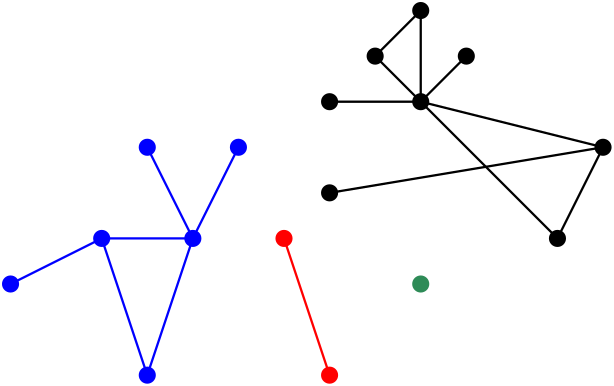}\caption{A graph that consists of four components}\label{fig.component}\end{figure}

For $G_{n,p}$, observe that, when $p=0$, the size of a largest component of $G_{n,p}$ is 1; in this case all of the edges are absent with probability 1, so each of the components is an isolated vertex. On the other hand, when $p=1$, $G_{n,p}$ is the complete graph with probability 1, so the size of its largest component is $n$.

\begin{figure}[h!]\centering\includegraphics[height=.8in]{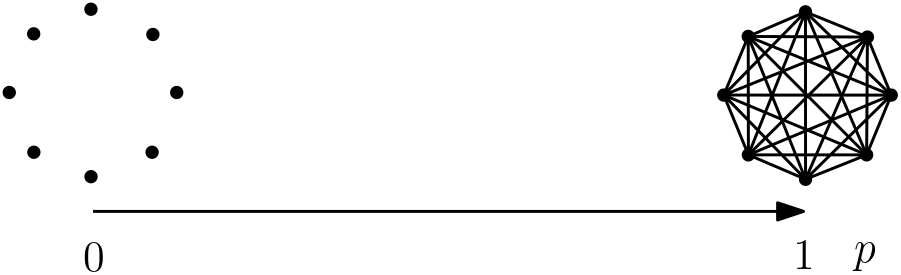}\caption{$G_{n,0}$ and $G_{n,1}$}\end{figure}

\nin Then what if $p$ is strictly between $0$ and $1$?

\begin{question}\label{Q1} What's the (typical) size of a largest component in $G_{n,p}$?\end{question}

Of course, one would naturally guess that as $p$ increases from $0$ to $1$, the typical size of a largest component in $G_{n,p}$ would also increase from 1 to $n$. But what is really interesting here is that there is a "sudden jump" in this increment.

In the following statement and everywhere else, \textit{with high probability} means that the probability that the event under consideration occurs tends to 1 as $n \ra \infty$.

\begin{theorem}[Erd\H{o}s-R\'enyi \cite{ER2}] For any $\eps>0$, the size of a largest component of $G_{n,p}$ is
\[
\begin{cases}
\le C_1(\eps) \log n & \mbox{if $np <1-\eps$} \\
\ge C_2(\eps) n & \mbox{if $np >1+\eps$}
\end{cases}\]
with high probability, where $C_1(\eps), C_2(\eps)$  depend only on $\eps$.
\end{theorem}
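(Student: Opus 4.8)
The plan is to base both directions on the \emph{component exploration process}. To reveal $C(v)$, the component of a fixed vertex $v$, one maintains disjoint sets of \emph{explored}, \emph{active}, and \emph{unseen} vertices, starting with $v$ active and all others unseen, and repeatedly takes an active vertex $u$, moves it to the explored set, and for each unseen $w$ queries the edge $uw$, making $w$ active if it is present; the process stops when no vertex is active, and then $|C(v)|$ equals the number of steps taken. If $Y_t$ is the number of active vertices after step $t$, then $Y_0=1$ and
\[
Y_t=Y_{t-1}-1+B_t,\qquad B_t\sim\mathrm{Bin}\bigl(n-(t-1)-Y_{t-1},\,p\bigr),
\]
the $B_t$ being independent given the history, so that $|C(v)|\ge k$ exactly when $Y_t\ge 1$ for every $t<k$. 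Since $n-(t-1)-Y_{t-1}\le n-1$, one can couple the exploration so that $Y_t\le\widehat Y_t:=1-t+\sum_{i\le t}\xi_i$ for all $t$, with $\xi_i$ i.i.d.\ $\mathrm{Bin}(n-1,p)$, and so that $Y_t\ge\widetilde Y_t:=1-t+\sum_{i\le t}\zeta_i$ for as long as fewer than $\beta n$ vertices have been seen, with $\zeta_i$ i.i.d.\ $\mathrm{Bin}(\lceil(1-\beta)n\rceil,p)$.

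In the subcritical range $np<1-\eps$ one has $\E\xi_i=(n-1)p<1-\eps$, so $|C(v)|\ge k$ would force $\sum_{i\le k-1}\xi_i\ge k-1$, which deviates above its mean by a factor at least $(1-\eps)^{-1}$; a Chernoff bound for the binomial then gives $\pr(|C(v)|\ge k)\le e^{-c(\eps)k}$, with $c(\eps)\ge\eps^2/2$ say, uniformly in $p$. Taking $k=C_1(\eps)\log n$ with $c(\eps)C_1(\eps)>1$ and a union bound over the $n$ choices of $v$ then shows that with high probability every component has at most $C_1(\eps)\log n$ vertices.

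In the supercritical range $np>1+\eps$, fix $\beta=\beta(\eps)>0$ small enough that $\E\zeta_i\ge(1-\beta)np\ge 1+\eps/2$. The first step is to bound $\pr(|C(v)|\ge\beta n)$ from below: while fewer than $\beta n$ vertices have been seen the exploration stays above $\widetilde Y_t$, which is the walk coding a Galton--Watson tree with offspring mean $\ge 1+\eps/2$; being supercritical, this tree survives with probability at least some $c_1(\eps)>0$, and on the survival event the exploration cannot stop before $\beta n$ vertices have been seen, all of which lie in $C(v)$, so $\pr(|C(v)|\ge\beta n)\ge c_1(\eps)$ for all large $n$. The second step is a first-moment bound that rules out intermediate components: an $s$-set spans a component only if it contains one of its at most $s^{s-2}$ spanning trees (each present with probability $p^{s-1}$, by Cayley's formula) and none of the $s(n-s)$ edges leaving it are present, so for $s\le\beta n$,
\[
\E\bigl[\#\{\text{components of size }s\}\bigr]\le\binom{n}{s}s^{s-2}p^{s-1}(1-p)^{s(n-s)}\le\frac{1}{s^{2}p}\Bigl(e\,np\,e^{-(1-\beta)np}\Bigr)^{s}.
\]
A short computation shows $e\,np\,e^{-(1-\beta)np}\le 1-\delta(\eps)<1$ for every $np\ge 1+\eps$, provided $\beta(\eps)$ is small enough; summing the resulting geometric series over $s\in[k_0,\beta n)$ with $k_0=K(\eps)\log n$ and using $1/p\le n$ gives, for a suitable $K(\eps)$,
\[
\pr\bigl(G_{n,p}\text{ has a component of size in }[k_0,\beta n)\bigr)=o(1/n),
\]
and the same bound holds in $G_{n',p}$ whenever $n'=n-o(n)$.

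The last step is to upgrade ``$\pr(\exists\text{ a component of size }\ge\beta n)\ge c_1(\eps)$'' to a high-probability statement. The naive second-moment method on the number of vertices in large components fails here, since the within-component (diagonal) pairs dominate the second moment, so instead I would explore repeatedly: run the exploration from $v_1=1$, stop if $|C(v_1)|\ge\beta n$, and otherwise restart from the least unexplored vertex, the untouched part being a fresh copy of $G_{n_1,p}$ with $n_1=n-|C(v_1)|$; iterate for $T=\lceil\log n\rceil$ rounds. Applying the first-moment bound in each round and summing over rounds, with probability $1-o(T/n)=1-o(1)$ no round ever produces a component of size in $[k_0,\beta n)$; on that event each unsuccessful round uses fewer than $k_0$ vertices, so after all $T$ rounds fewer than $Tk_0=o(n)$ vertices have been used. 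Each round then operates on $n-o(n)$ vertices with density $p$ still satisfying $(\text{size})\cdot p\ge 1+\eps/2$, so by the first step applied to the leftover graph each round produces a component of size at least $C_2(\eps)\,n$ with conditional probability at least some $c'(\eps)>0$. Hence $\pr(\text{no round succeeds})\le(1-c'(\eps))^{T}+o(1)=o(1)$, and a successful round exhibits the required linear-size component. I expect the main obstacle to be precisely this last passage from a positive chance to high probability, together with the bookkeeping in the branching-process comparisons---the coupling $Y_t\ge\widetilde Y_t$ is valid only until a $\beta$-fraction of the vertices has been exposed. A common alternative for the boosting step is a two-round ``sprinkling'' argument, which has the bonus of showing that the giant component is unique.
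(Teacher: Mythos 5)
The paper does not prove this theorem; it is an expository \emph{Notices} survey, and the Erd\H{o}s--R\'enyi phase transition is stated with a citation to \cite{ER2} and no proof. So there is no in-paper argument to compare against, and I'll assess your proof on its own terms.

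Your argument is correct and is the standard modern treatment (exploration process coupled above and below by binomial random walks, i.e., a branching-process comparison), rather than the original Erd\H{o}s--R\'enyi counting argument. The subcritical part is clean: the domination $Y_t\le\widehat Y_t$ and a Chernoff bound on $\sum_{i\le k-1}\xi_i\ge k-1$ give an exponential tail, and a union bound over $v$ finishes it. The supercritical part is also sound, and you correctly identify the two standard pitfalls: (i) the lower coupling $Y_t\ge\widetilde Y_t$ is only valid until a $\beta$-fraction of vertices has been exposed, and you use it only for that long, which suffices because survival of $\widetilde Y$ forces $\widetilde Y_t\ge 1$ for the first $\beta n$ steps; and (ii) the naive second-moment argument on $\sum_v\one\{|C(v)|\ge\beta n\}$ genuinely fails because the diagonal (same-component) pairs dominate. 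Your fix --- iterate the exploration $T=\lceil\log n\rceil$ times on the untouched remainder, each time a fresh $G_{n',p}$ with $n'=n-o(n)$, and combine with the first-moment estimate that rules out components in the window $[K(\eps)\log n,\beta n)$ --- works and is a correct alternative to sprinkling. Two small points worth tightening if you write this up in full: the branching-process survival probability $c_1(\eps)$ depends on the offspring law $\mathrm{Bin}(\lceil(1-\beta)n\rceil,p)$, not just its mean, so one should note that this survival probability is bounded below uniformly in $n$ once the mean is $\ge 1+\eps/2$ (e.g., by comparison with a two-point offspring law or by convergence to $\mathrm{Poisson}(1+\eps/2)$); and the inequality $enp\,e^{-(1-\beta)np}<1$ needs checking for \emph{all} $np\ge 1+\eps$, which holds because the exponent $(1-\beta)x-\log x$ is increasing for $x\ge 1/(1-\beta)$, so the minimum over the relevant range is at $x=1+\eps$, where smallness of $\beta(\eps)$ is exactly what you need. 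These are bookkeeping details, not gaps. Your observation that sprinkling is an alternative that also yields uniqueness of the giant is apt.
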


\nin The above theorem says that if $p$ is "slightly smaller" than $\frac{1}{n}$, then typically all of the components of $G_{n,p}$ are very small (note that $\log n$ is much smaller than the number of vertices, $n$). 

\begin{figure}[h]\centering\includegraphics[height=1.8in]{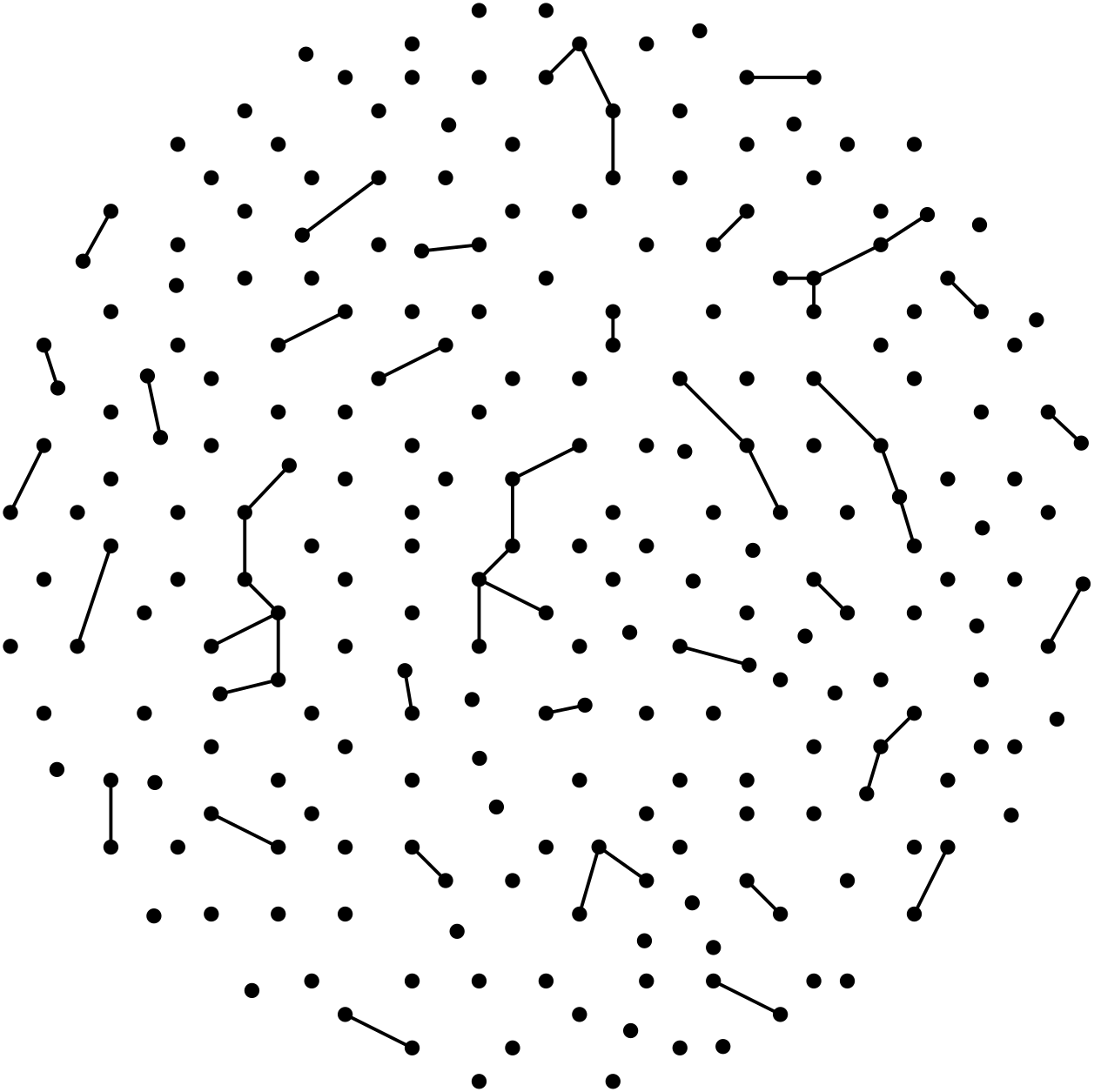}\caption{$G_{n,p}$ with all components small ($np<1-\eps$)}\end{figure}

\nin On the other hand, if $p$ is "slightly larger" than $\frac1n$, then the size of a largest component of $G_{n,p}$ is as large as linear in $n$. It is also well-known that all other components are very small (at most of order $\log n$), and this unique largest component is called the \textit{giant component}.

\begin{figure}[h]\centering\includegraphics[height=1.8in]{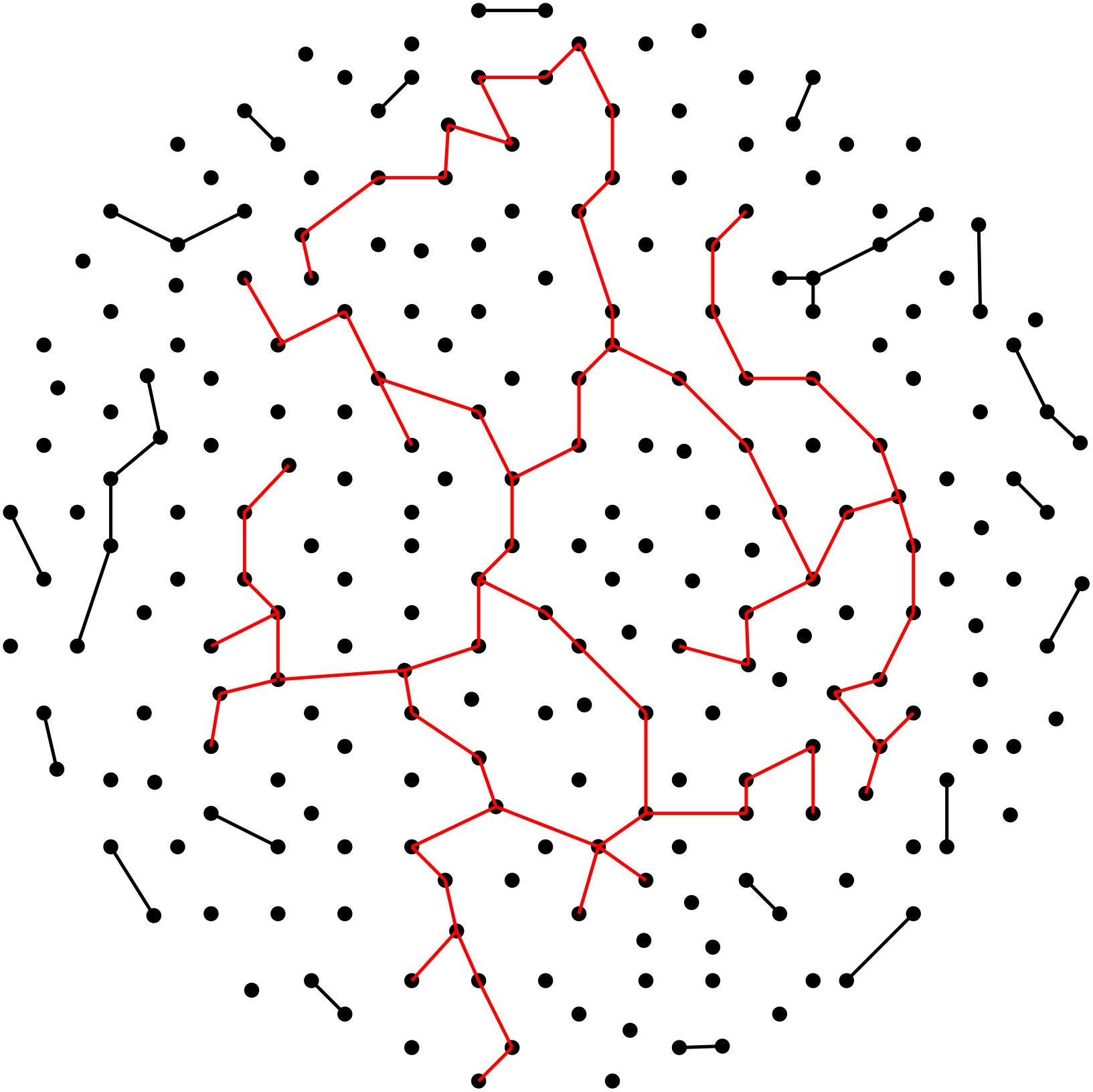}\caption{$G_{n,p}$ with the giant component ($np>1-\eps$)}\end{figure}

So around the value $p=\frac{1}{n}$, the giant component "suddenly" appears, and therefore the structure of $G_{n,p}$ also drasitically changes. This is one example of the \textit{threshold phenomena} that $G_{n,p}$ exhibits, and the value $p=\frac{1}{n}$ is a \textit{threshold function} for $G_{n,p}$ of having the giant component. (The formal definition of a threshold function is given in Definition~\ref{def.threshold}. See also the definition of \textit{the threshold} in Section \ref{sec.ETT}.)

The abrupt appearance of the giant component of $G_{n,p}$ is just one instance of vast threshold phenomena for \textit{random discrete structures}. In this article, we will mostly deal with $G_{n,p}$ for the sake of concreteness, but there will be a brief discussion about a more general setting in Section \ref{sec.ETT}.

Now we introduce the formal definition of a threshold function due to Erd\H{o}s and R\'enyi. Recall that, in $G_{n,p}$, all the vertices are labelled $1,\ldots,n$. A \textit{graph property} is a property that is invariant under graph isomorphisms (i.e. relabelling the vertices), such as $\{\text{connected}\}$, $\{\text{planar}\}$, $\{\text{triangle-free}\}$, etc. We use $\cF (=\cF_n)$ for a graph property, and $G_{n,p} \in \cF$ denotes that  $G_{n,p}$ has property $\cF$.

\begin{mydef}\label{def.threshold}
Given a graph property $\cF (=\cF_n)$, we say that $p_0=p_0(n)$ is a \emph{threshold function}\footnote{By the definition, a threshold function is determined up to a constant factor thus not unique, but conventionally people also call this \textit{the} threshold function. In this article, we will separately define \textit{the threshold} in Section \ref{sec.ETT}, which is distinguished from a threshold function.} (or simply a \emph{threshold}) for $\cF$ if
\[\pr(G_{n,p} \in \cF) \ra \begin{cases}
0 & \text{ if } p \ll p_0 \\
1 & \text{ if } p \gg p_0.
\end{cases}\]
\end{mydef}

\nin For example, $p_0=\frac{1}{n}$ is a threshold function for the existence of the giant component.

Note that it is not obvious at all whether a given graph property would admit a threshold function. Erd\H{o}s and R\'enyi proved that many graph properties have a threshold function, and about 20 years later, Bollob\'as and Thomason proved that, in fact, there is a wide class of properties that admit a threshold function. In what follows, an \textit{increasing (graph) property} is a property that is preserved under addition of edges. For example, connectivity is an increasing property, because if a graph is connected then it remains connected no matter what edges are additionally added.

\begin{theorem}[Bollobas-Thomason \cite{BT}]\label{thm.BT}
Every increasing property has a threshold function.
\end{theorem}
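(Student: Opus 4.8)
The plan is to prove that the \emph{median} is itself a threshold. Since $\cF$ is increasing, the coupling in which $G_{n,q}$ is obtained by keeping each potential edge $e$ whenever an independent uniform $U_e\in[0,1]$ satisfies $U_e\le q$ shows that $q\mapsto\pr(G_{n,q}\in\cF)$ is non-decreasing; it is also a polynomial in $q$, hence continuous. Apart from the degenerate cases where $\cF_n$ holds for no graph or for every graph (each of which admits any threshold trivially), this function runs continuously from $0$ at $q=0$ to $1$ at $q=1$, so by the intermediate value theorem there is a $p_c=p_c(n)\in(0,1)$ with $\pr(G_{n,p_c}\in\cF)=1/2$. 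I would then show that $p_0:=p_c$ works.

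The one device I would use is \emph{taking unions of independent copies}. If $G^{(1)},\ldots,G^{(k)}$ are independent copies of $G_{n,q}$, then $G^{(1)}\cup\cdots\cup G^{(k)}$ has the law of $G_{n,1-(1-q)^k}$, and since $1-(1-q)^k\le kq$ this union can be coupled to lie inside $G_{n,kq}$. Because $\cF$ is increasing, the union belongs to $\cF$ as soon as a single $G^{(i)}$ does, so
\[
  \pr(G_{n,kq}\in\cF)\;\ge\;\pr\big(\textstyle\bigcup_i G^{(i)}\in\cF\big)\;\ge\;1-\big(1-\pr(G_{n,q}\in\cF)\big)^{k}.
\]
This single inequality drives both halves. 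Taking $q=p_c$ and $k=\lfloor p/p_c\rfloor\to\infty$ in the regime $p\gg p_c$ gives $\pr(G_{n,p}\in\cF)\ge\pr(G_{n,kp_c}\in\cF)\ge 1-2^{-k}\to 1$. For the regime $p\ll p_c$ I would read the same inequality backwards: it rearranges to $\pr(G_{n,q}\notin\cF)\ge\pr(G_{n,kq}\notin\cF)^{1/k}$, so with $kq=p_c$ and $k=\lceil p_c/p\rceil\to\infty$ we get $\pr(G_{n,p}\in\cF)\le\pr(G_{n,p_c/k}\in\cF)\le 1-2^{-1/k}\to 0$ (using monotonicity for the first step). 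Hence $p_c$ is a threshold for $\cF$.

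The step I expect to be the main obstacle is the $p\ll p_c$ direction. A naive union bound only yields \emph{lower} bounds on $\pr(\cdot\in\cF)$ — one lucky copy suffices — so it is not at all obvious how to bound $\pr(G_{n,p}\in\cF)$ from \emph{above}; the resolution is exactly the observation that the displayed inequality is genuinely two-sided, with its ``backwards'' form controlling the probability of \emph{avoiding} $\cF$ after $p$ is boosted by a factor $k$. One also has to notice that $k$ may grow as slowly as one likes — it only needs to track the ratio $p/p_c$ — so both loss factors $2^{-k}$ and $2^{-1/k}$ tend to the desired limits. Notably, no information about the size of $p_c(n)$ is ever required: the theorem asserts only that a threshold function exists, not which one it is.
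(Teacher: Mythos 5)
The paper states this theorem only with a citation to Bollob\'as--Thomason and gives no proof, so there is nothing in the source to compare against; I will assess your argument on its own. Your approach --- locating the median $p_c$ via continuity and monotonicity of $p\mapsto\mu_p(\cF)$, and then boosting with unions of independent copies of $G_{n,q}$, using $G^{(1)}\cup\cdots\cup G^{(k)}\sim G_{n,1-(1-q)^k}$ coupled inside $G_{n,kq}$ --- is exactly the standard modern proof of this theorem, and the key inequality
\[
\pr(G_{n,kq}\in\cF)\ \ge\ 1-\bigl(1-\pr(G_{n,q}\in\cF)\bigr)^{k}
\]
does indeed drive both directions, as you say.

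There is, however, a small but genuine slip in the $p\ll p_c$ direction: you set $kq=p_c$ with $k=\lceil p_c/p\rceil$, which gives $q=p_c/k\le p$, so the monotonicity step would have to read $\pr(G_{n,p_c/k}\in\cF)\le\pr(G_{n,p}\in\cF)$ --- the \emph{reverse} of the inequality you wrote, and useless for an upper bound. The fix is to take $k=\lfloor p_c/p\rfloor$ instead; then $p\le p_c/k$ and the comparison goes the way you want. Cleaner still is to dispense with the extra monotonicity step entirely: set $q=p$ and $k=\lfloor p_c/p\rfloor$, so $kp\le p_c$ and hence $\pr(G_{n,kp}\notin\cF)\ge 1/2$; the rearranged inequality
\[
\pr(G_{n,p}\notin\cF)\ \ge\ \pr(G_{n,kp}\notin\cF)^{1/k}\ \ge\ 2^{-1/k}
\]
then gives $\pr(G_{n,p}\in\cF)\le 1-2^{-1/k}\to 0$ directly, with $k\to\infty$ since $p\ll p_c$. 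With this correction your proof is complete.
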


\nin Now it immediately follows from the above theorem that all the properties that we have mentioned so far -- connectivity, planarity\footnote{We can apply the theorem for non-planarity, which is an increasing property.}, having the giant component, etc. -- have a threshold function (thus exhibit a threshold phenomenon). How fascinating it is!

On the other hand, knowing that a property $\cF$ has a threshold function $p_0=p_0(\cF)$ does not tell us anything about the value of $p_0$. So it naturally became a central interest in the study of random graphs to find a threshold function for various increasing properties. One of the most studied classes of increasing properties is \textit{subgraph containment}, i.e., the question of for what $p=p(n)$, $G_{n,p}$ is likely/unlikely to contain a copy of the given graph. Figure \ref{well-known}  shows some of the well-known threshold functions for various subgraph containments (and that for connectivity).

\begin{figure}[h]\centering\includegraphics[height=.65in]{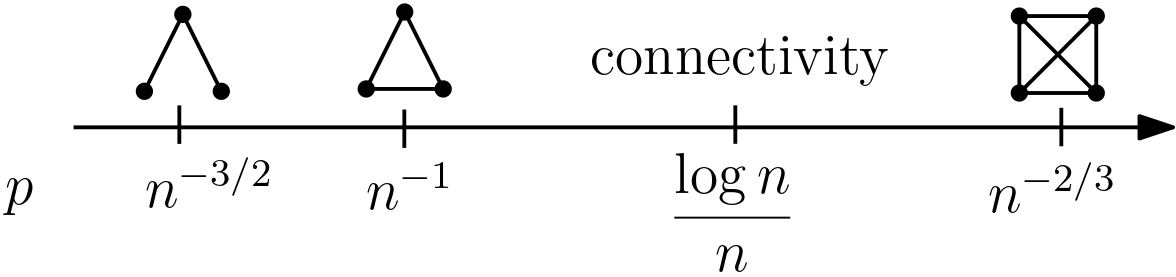}\caption{Some well-known thresholds}\label{well-known}\end{figure}

\begin{example}\label{ex.triangle}
Figure \ref{well-known} says that $p=\frac1n$ is a threshold function for the property $\cF=\{\text{contain a triangle}\}$. Recall from the definition of a threshold that this means 

\begin{enumerate}[(i)]\label{ex2.5}
\item if $ p \ll \frac1n$ then $\pr(G_{n,p} \text{ contains a triangle})\ra 0$; and
\item if $p \gg \frac1n$ then $\pr(G_{n,p} \text{ contains a triangle})\ra 1.$
\end{enumerate}

\nin We have already justified (i) in Example \ref{ex2} by showing that
\[\mathbb E[\text{number of triangles in $G_{n,p}$}] \ra 0 \text{ if } p \ll \frac{1}{n}.\]

However, showing (ii) is an entirely different story. As discussed in Remark \ref{rmk.1st}, the fact that 
\[\mathbb E[\text{number of triangles in $G_{n,p}$}] \ra \infty\]
does not necessarily imply that $G_{n,p}$ typically contains many triangles. Here we briefly describe one technique, which is called the \textit{second moment method}, that we can use to show (ii): let $X$ be the number of triangles in $G_{n,p}$, noting that  then $X$ is a random variable. By showing that the variance of $X$ is very small, which implies that $X$ is "concentrated around" $\mathbb EX$, we can derive (from the fact that $\mathbb EX$ is huge) that typically the number of triangles in $G_{n,p}$ is huge. We remark that the second moment method is only a tiny part of the much broader topic of \textit{concentration of a probability measure.}
\end{example}

We stress that, in general, finding a threshold function for a given increasing property is a very hard task. To illustrate this point, let's consider one of the most basic objects in graph theory, a \textit{spanning tree} -- a tree that contains all of the vertices. 

\begin{figure}[h]\centering\includegraphics[height=.8in]{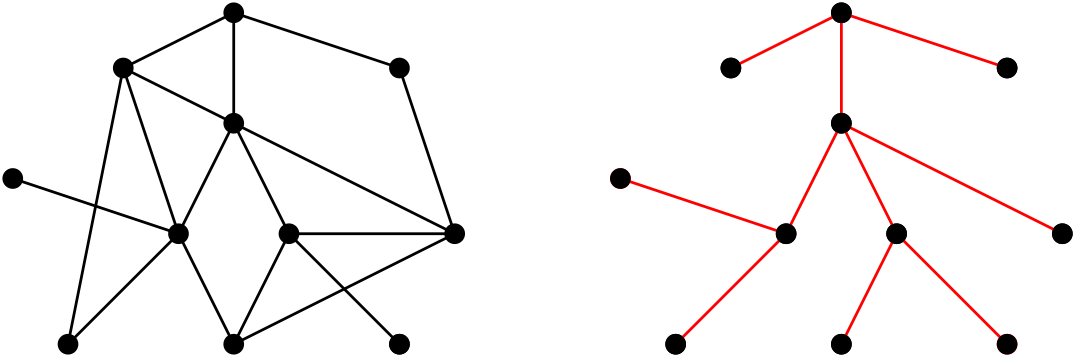}\caption{A connected graph and a spanning tree in it}\label{spanningtree}\end{figure}

The question of finding a threshold function for $G_{n,p}$ of containing a spanning tree\footnote{This is equivalent to $G_{n,p}$ is connected.} was one of the first questions studied by Erd\H{o}s and R\'enyi. Already in their seminal paper~\cite{ER2}, Erd\H{o}s and R\'enyi showed that a threshold function for containing a spanning tree is $p_0=\frac{\log n}{n}$. However, the difficulty of this problem immensely changes if we require $G_{n,p}$ to contain a \textit{specific} (up to isomorphisms) spanning tree (or more broadly, a spanning graph\footnote{A spanning graph means a graph that contains all of the vertices}). For example, one of the biggest open questions in this area back in 1960s was finding a threshold function for a \textit{Hamiltonian cycle} (a cycle that contains all of the vertices).

\begin{figure}[h]\centering\includegraphics[height=.9in]{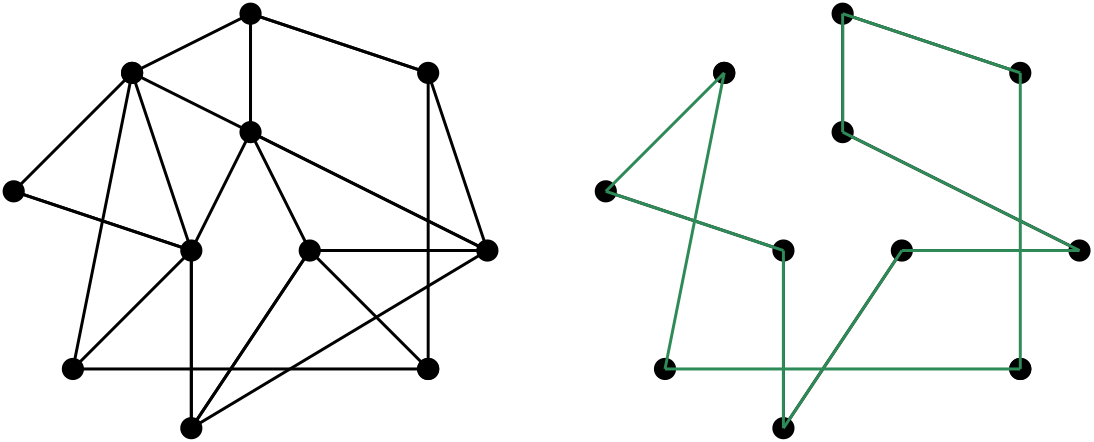}\caption{A graph and a Hamiltonian cycle in it}\label{spanningtree}\end{figure}

\nin This problem was famously solved by P\'osa in 1976.

\begin{theorem}[P\'osa \cite{Posa}]\label{thm.Posa} A threshold function for $G_{n,p}$ to contain a Hamiltonian cycle is
\[p_0(n)=\frac{\log n}{n}.\]
\end{theorem}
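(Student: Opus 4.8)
The plan is to prove the two directions separately. For the $0$-statement, suppose $p \ll \frac{\log n}{n}$. Since a graph that contains a Hamiltonian cycle is connected, it suffices to show $\pr(G_{n,p}\text{ is connected})\to 0$, and this is exactly the Erd\H{o}s--R\'enyi connectivity fact quoted above: the threshold function for $G_{n,p}$ being connected is $\frac{\log n}{n}$, so for $p\ll\frac{\log n}{n}$ the graph is disconnected with high probability. (One can also see this directly: the expected number of isolated vertices is $n(1-p)^{n-1}=e^{\log n-pn(1+o(1))}\to\infty$ since $pn=o(\log n)$, and a second-moment computation shows this count is concentrated, so with high probability $G_{n,p}$ has an isolated vertex.) Hence $G_{n,p}$ is with high probability non-Hamiltonian.

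The $1$-statement is the substantial direction, and I would prove it by P\'osa's rotation--extension method combined with a sprinkling argument. Write $p=p_1+p_2-p_1p_2$ with $p_1=p_2=1-\sqrt{1-p}\ge p/2$, so both are still $\gg\frac{\log n}{n}$, and couple $G_{n,p}$ with $G_1\cup G_2$, where $G_1,G_2$ are independent copies of $G_{n,p_1},G_{n,p_2}$. First I would record that, with high probability, $G_1$ is connected and has the \emph{expansion property}: every $S\sub V$ with $1\le|S|\le n/4$ satisfies $|N(S)\sm S|\ge 2|S|$. Connectivity again comes from the Erd\H{o}s--R\'enyi threshold together with $p_1\gg\frac{\log n}{n}$; the expansion property is a routine first-moment bound, since the probability that some $S$ of size $s$ has $N(S)\sm S$ contained in some prescribed set of size $2s-1$ is at most $\binom ns\binom n{2s}(1-p_1)^{s(n-3s)}\le \exp(s(3\log n-p_1n/4))$, which sums to $o(1)$ over $1\le s\le n/4$ because $p_1n\gg\log n$.

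The combinatorial heart is P\'osa's lemma. Given any graph $H$ with the expansion property, take a longest path $P$ of $H$, fix one endpoint $b$, and let $R=R(b)$ be the set of vertices that occur as the other endpoint of a longest path of $H$ reachable from $P$ by \emph{rotations} keeping $b$ fixed (a rotation trades $P=b\cdots yxa'$, with $a'$ the current free endpoint adjacent to an internal vertex $x$, for $b\cdots ya'\cdots x$). P\'osa's lemma states $|N(R)\sm R|<2|R|$, which together with the expansion property forces $|R|>n/4$; I would prove it in the usual way, by checking that every $x\in N(R)\sm R$ is ``blocked'' (both of its neighbours along the relevant paths lie in $R$) and that each vertex of $R$ blocks at most two such $x$. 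Now suppose $H$ is connected, has the expansion property, and is \emph{not} Hamiltonian, with longest path $P$: then there are $>n/4$ choices of free endpoint $a'$, and, fixing any such $a'$ and rotating the other end, $>n/4$ choices of $b'$ with $H$ having a longest path from $a'$ to $b'$. For every such pair we must have $a'b'\notin E(H)$ --- otherwise closing $a'b'$ would produce a cycle on $V(P)$, which is either a Hamiltonian cycle (contradicting non-Hamiltonicity) or, using connectivity of $H$, can be reopened into a path longer than $P$ (contradicting maximality of $P$). Thus these $\ge (n/4)^2$ ordered pairs give at least $n^2/32$ distinct non-edges of $H$, each a \emph{booster}: a non-edge whose addition creates a Hamiltonian cycle or a strictly longer path.

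To finish, condition on $E(G_1)$, so that with high probability the previous paragraph applies to \emph{every} graph containing $G_1$ (expansion is preserved under adding edges), and then reveal the edges of $G_2$ one at a time in a uniformly random order. Whenever the current graph is not yet Hamiltonian it has $\ge n^2/32$ boosters, so the next revealed edge is a booster with probability $\gtrsim n^2/\binom n2\gtrsim\tfrac1{16}$, as long as only $o(n^2)$ edges have been revealed so far. Each booster added either lengthens a longest path by one or completes a Hamiltonian cycle, and by P\'osa's lemma the longest path of $G_1$ already has more than $n/4$ vertices (the rotation endpoints all lie on it), so at most $n$ boosters suffice to reach a Hamiltonian cycle. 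Since $G_2$ has $\omega(n)$ edges with high probability, a Chernoff bound produces at least $n$ boosters among them, and therefore $G_1\cup G_2\sim G_{n,p}$ is Hamiltonian with high probability. The main obstacle is P\'osa's lemma itself --- the rotation idea and the bound $|N(R)\sm R|<2|R|$; once that is in hand, extracting the $\Omega(n^2)$ boosters and carrying out the sprinkling (handling the $G_1\cap G_2$ overlap and the ``$o(n^2)$ revealed so far'' caveat) is routine bookkeeping.
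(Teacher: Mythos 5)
The paper merely cites P\'osa's result \cite{Posa} without proof, so there is no internal argument to compare against; what follows assesses your sketch on its own terms. It is a correct rendition of the now-standard proof. The $0$-statement via isolated vertices (equivalently the connectivity threshold) is fine. For the $1$-statement, all the pieces are present and in the right order: the first-moment bound giving the vertex-expansion property when $p_1\gg\log n/n$; P\'osa's rotation lemma converting expansion into a rotation-endpoint set $R$ of size $>n/4$; the double-rotation count producing $\Omega(n^2)$ boosters in any connected, non-Hamiltonian graph with that expansion (the closed-cycle-reopens-by-connectivity step is exactly right); and edge-by-edge sprinkling with a Chernoff-type finish. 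The one place that wants tightening when written in full is the stochastic-domination step: to get a clean binomial lower bound you should track an indicator for the event ``$H_{i-1}$ is already Hamiltonian \emph{or} $e_i$ is a booster for $H_{i-1}$'', so the conditional success probability is $\ge c$ regardless of history, and then observe separately that $n+1$ successes among the first $O(n)$ revealed edges forces $G_1\cup G_2$ to be Hamiltonian. That is bookkeeping rather than a gap; overall the plan is sound.
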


Note that both threshold functions for $\{$contain any spanning tree$\}$ and $\{\text{contain a Hamiltonian cycle}\}$ are of order $\frac{\log n}{n}$, even though the latter is a stronger requirement. Later we will see (in the discussion that follows Example \ref{ex.PM}) that $\frac{\log n}{n}$ is actually an easy lower bound on both threshold functions. It has long been conjectured that for \textit{any} spanning tree\footnote{More precisely, for any \textit{sequence} of spanning trees $\{T_n\}$} with a constant maximum degree, its threshold function is of order  $\frac{\log n}{n}$. This conjecture was only very recently proved by Montgomery \cite{Montgomery}.

\section{The Kahn--Kalai Conjecture: a preview}

Henceforth, $\cF$ always denotes an increasing property.

In 2006, Jeff Kahn and Gil Kalai \cite{KK} posed an extremely bold conjecture that captures the location of threshold functions for \textit{any} increasing properties. Its formal statement will be given in Conjecture~\ref{Conj.KKC.gr} (graph version) and Theorem~\ref{thm.PP} (abstract version), and in this section we will give an informal description of this conjecture first. All of the terms not defined here will be discussed in the forthcoming sections.

Given an $\cF$, we are interested in locating its threshold function, $p_0(\cF)$.\footnote{We switch the notation from $p_0(n)$ to $p_0(\cF)$ to emphasize its dependence on $\cF$.} But again, this is in general a very hard task.

Kahn and Kalai introduced another quantity which they named the \textit{expectation threshold} and denoted by $\pE(\cF)$, which is associated with some sort of expectation calculations as its name indicates. By its definition (Definition \ref{def.exp.thr}),
\[\mbox{$\pE(\cF) \le p_0(\cF)$ for any $\cF$,}\]
and, in particular, $\pE(\cF)$ is easy to compute for many interesting increasing properties $\cF$. So $\pE(\cF)$ provides an "easy" lower bound on the hard parameter $p_0(\cF)$. A really fascinating part is that then Kahn and Kalai conjectured that $p_0(\cF)$ is, in fact, bounded \textit{above} by $\pE(\cF)$ multiplied by some tiny quantity!

\begin{figure}[h!]\centering\includegraphics[height=.7in]{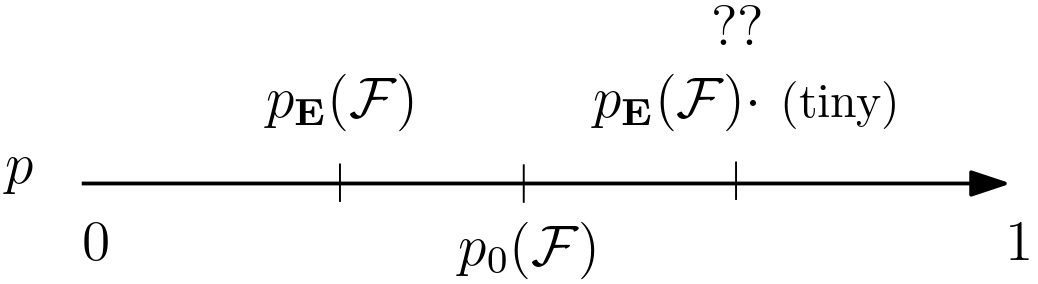}\end{figure}

\nin So this conjecture asserts that, for any $\cF$, $p_0(\cF)$ is actually well-predicted by (much) easier $\pE(\cF)$!

The graph version of this conjecture (Conjecture \ref{Conj.KKC.gr}) is still open, but the abstract version (Theorem \ref{thm.PP}) is recently proved in \cite{PPT}.

\section{Motivating examples}\label{sec.examples}

The conjecture of Kahn and Kalai is very strong, and even the authors of the conjecture wrote in their paper \cite{KK} that "it would probably be more sensible to conjecture that it is \textit{not} true." The fundamental question that motivated this conjecture was:

\begin{question}\label{Q}
What drives thresholds?
\end{question}

\nin All of the examples in this section are carefully chosen to show the motivation behind the conjecture.

Recall that the definition of a threshold (Definition \ref{def.threshold}) doesn't distinguish constant factors. So in this section, we will use the convenient notation $\gtrsim, \lesssim$, and $\asymp$ to mean (respectively) $\ge, \le$, and $=$ up to constant factors. Finally, write $p_0(H)$ for a threshold function for $G_{n,p}$ of containing a copy of $H$, for notational simplicity.

\begin{example}\label{ex.H} Let $H$ be the graph in Figure~\ref{example.H}. Let's find $p_0(H)$.

\begin{figure}[h]\centering\includegraphics[height=.5in]{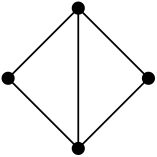}\caption{Graph $H$}\label{example.H}\end{figure}

\nin In Example \ref{ex.triangle}, we observed that there is a connection between a threshold function and computing expectations. As we did in Examples \ref{ex.edges} and \ref{ex.triangles},
    \[\begin{split}
       & \mathbb E[\text{number of $H$'s in $G_{n,p}$}] \\
&= (\text{number of (labelled) $H$'s in $K_n$}) \times \\
& \quad  \pr(\text{each (labelled) copy of $H$ is present in $G_{n,p}$})\\
        &\stackrel{(\dagger)}{\asymp} n^4 p^5,
    \end{split}\]
where $(\dagger)$ is because the number of $H$'s in $K_n$ is of order $ n^4$ (since $H$ has four vertices), and $\pr(\text{each copy of $H$ is present})$ is precisely $p^5$ (since $H$ has five edges).    So we have
    \beq{eq.H exp}
    \mathbb E[\text{number of $H$'s in $G_{n,p}$}] \ra
    \begin{cases}
        0 \quad & \text{ if } p \ll n^{-4/5};\\
        \infty \quad & \text{ if } p \gg n^{-4/5},
    \end{cases}
    \enq
and let's (informally) call the value $p=n^{-4/5}$
\[\text{"the threshold for the expectation of $H$."}\]
This name makes sense since $p=n^{-4/5}$ is where the expected number of $H$'s drastically changes. Note that \eqref{eq.H exp} tells us that
\[\pr(G_{n,p} \supseteq H) \ra 0 \quad \text{ if } p \ll n^{-4/5},\]
so, by the definition of a threshold, we have 
\[n^{-4/5} \lesssim p_0(H).\]
This way, we can always easily find a lower bound on $p_0(F)$ for any graph $F$.

What is interesting here is that, for $H$ in Figure \ref{example.H}, we can actually show that
\[\pr(G_{n,p} \supseteq H) \ra 1 \quad \text{ if } p \gg n^{-4/5}\]
using the second moment method (discussed in Example~\ref{ex.triangle}). This tells us a rather surprising fact that $p_0(H)$ is actually \textit{equal} to the threshold for the expectation of $H$.

\medskip

\nin \fbox{\begin{minipage}{9.1cm}\textbf{Dream.} Maybe $p_0(F)$ is always equal to the threshold for the expectation of $F$ for \textit{any} graph $F$?\end{minipage}}

\smallskip

\end{example}

The next example shows that the above dream is too dreamy to be true.

\begin{example}\label{example.tilde H} Consider $\tilde H$ in Figure \ref{example.H tilde} this time. Notice that $\tilde H$ is $H$ in Figure \ref{example.H} with a "tail."

\begin{figure}[h!]\centering\includegraphics[height=.5in]{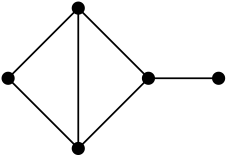}\caption{Graph $\tilde H$}\label{example.H tilde}\end{figure}

By repeating a similar computation as before, we have
    \[
    \mathbb E[\text{number of $\tilde H$'s in $G_{n,p}$}] \ra
    \begin{cases}
        0 \quad & \text{ if } p \ll n^{-5/6};\\
        \infty \quad & \text{ if } p \gg n^{-5/6},
    \end{cases}
    \]
so the threshold for the expectation of $\tilde H$ is $n^{-5/6}$. Again, this gives that
\[\pr(G_{n,p} \supseteq \tilde H) \ra 0 \quad \text{ if } p \ll n^{-5/6},\]
so we have $n^{-5/6} \lesssim p_0(\tilde H)$. However, the actual threshold $p_0(\tilde H)$ is $n^{-4/5}$, which is much larger than the lower bound.

\begin{figure}[h!]\centering\includegraphics[height=.85in]{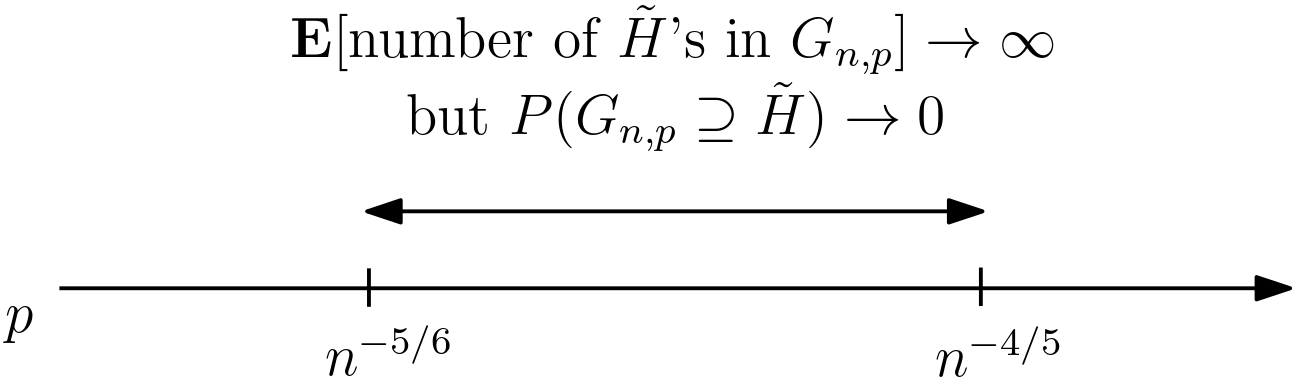}\caption{Gap between $p_0(\tilde H)$ and the expectational lower bound}\label{example.H tilde exist}\end{figure}

This is interesting, because Figure \ref{example.H tilde exist}  tells us that when $n^{-5/6} \ll p \ll n^{-4/5}$, $G_{n,p}$ contains a huge number of $\tilde H$ "on average," but still it is very unlikely that $G_{n,p}$ actually contains $\tilde H$. What happens in this inverval?

Here is an explanation. Recall from Example \ref{ex.H} that if $p \ll n^{-4/5}$, then $G_{n,p}$ is unlikely to contain $H$. But
\[\text{the absence of $H$ implies the absence of $\tilde H$,}\]
because $H$ is a subgraph of $\tilde H$! 

So when $n^{-5/6} \ll p \ll n^{-4/5}$, it is highly unlikely that $G_{n,p}$ contains $\tilde H$ because it is already unlikely that $G_{n,p}$ contains $H$. However, if $G_{n,p}$ happens to contain $H$, then that copy of $H$ typically has lots of "tails" as in Figure \ref{example.H tails}. This produces a huge number of copies of $\tilde H$'s in $G_{n,p}$.

\begin{figure}[h]\centering\includegraphics[height=.5in]{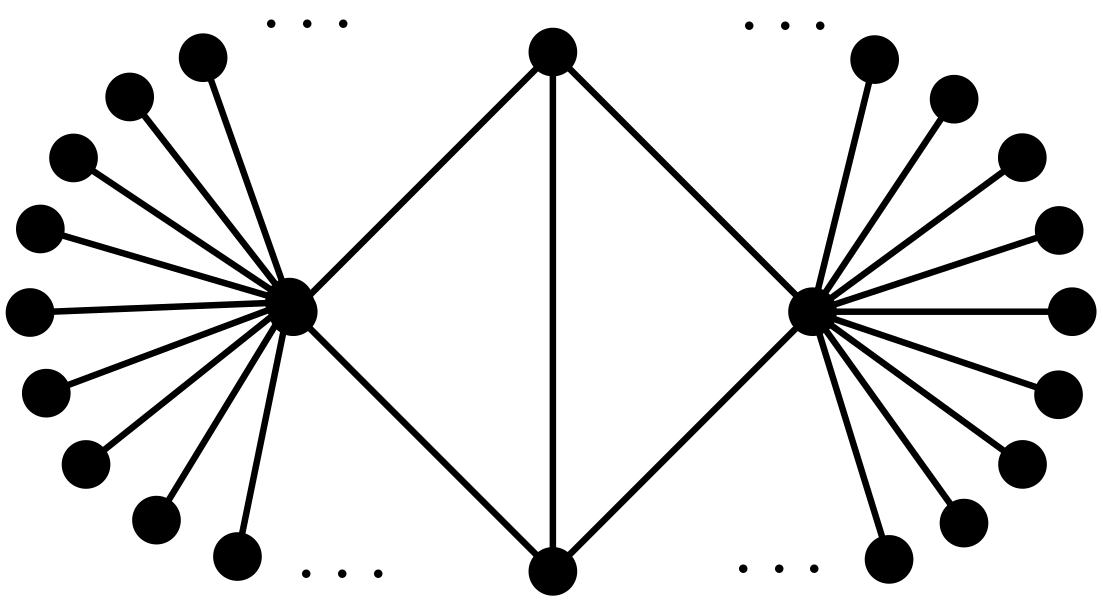}\caption{$H$ with many "tails"}\label{example.H tails}\end{figure}

\nin Maybe you have noticed the similarity between this example and the example of a lottery in Remark \ref{rmk.1st}.

\end{example}

In Example \ref{example.tilde H}, $p_0(\tilde H)$ is not predicted by the expected number of $\tilde H$, thus the \textbf{Dream} is broken. However, it still shows that $p_0(\tilde H)$ is predicted by the expected number of \textit{some} subgraph of $\tilde H$, and, intriguingly, this holds true in general. To provide its formal statement, define the \textit{density} of a graph $F$ by
\[\text{density($F$)}=\frac{\text{(the number of edges of $F$)}}{\text{(the number of vertices of $F$})}.\]
The next theorem tells us the exciting fact that we can find $p_0(F)$ by just looking at its densest subgraph, as long as $F$ is fixed.\footnote{For example, a Hamiltonian cycle is not a fixed graph, since it grows as $n$ grows.}

\begin{theorem}[Bollob\'as \cite{Bollobas}]\label{thm.bol} For any \emph{fixed} graph $F$, $p_0(F)$ is equal to the threshold for the expectation of the \emph{densest} subgraph of $F$.
\end{theorem}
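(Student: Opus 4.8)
The plan is to pin down $p_0(F)$ to within constant factors and check that it matches the expectation threshold of the densest subgraph. Write $v(\cdot)$ and $e(\cdot)$ for numbers of vertices and edges, and set
\[
m(F) := \max\{\, e(H)/v(H) \,:\, H \sub F,\; v(H)\ge 1 \,\},
\]
the maximum subgraph density of $F$; let $H^\ast \sub F$ attain this maximum, so $H^\ast$ is a densest subgraph. The expectation calculation from Examples~\ref{ex.edges}, \ref{ex.triangles} and \ref{ex.H} gives $\mathbb E[\text{number of copies of $H^\ast$ in $G_{n,p}$}] \asymp n^{v(H^\ast)}p^{e(H^\ast)}$, which transitions between $0$ and $\infty$ at $p = n^{-v(H^\ast)/e(H^\ast)} = n^{-1/m(F)}$; this is exactly the threshold for the expectation of the densest subgraph. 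Since a threshold is unique up to constant factors, it suffices to show that $n^{-1/m(F)}$ is itself a threshold for $\{\text{$G_{n,p}$ contains a copy of $F$}\}$. For the ``$0$-statement'', if $p \ll n^{-1/m(F)}$ then $\mathbb E[\text{number of copies of $H^\ast$}] \to 0$, so by Markov's inequality $G_{n,p}\not\supseteq H^\ast$ with high probability; as $H^\ast \sub F$ this forces $G_{n,p}\not\supseteq F$, giving $\pr(G_{n,p}\supseteq F)\to 0$. This is the same first-moment argument already used for $H$ and $\tilde H$.

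The ``$1$-statement'' — if $p \gg n^{-1/m(F)}$ then $\pr(G_{n,p}\supseteq F)\to 1$ — is where the work lies, and I would prove it by the second moment method applied to $X :=$ (number of copies of $F$ in $G_{n,p}$). First, $\mathbb E X \asymp n^{v(F)}p^{e(F)} \to \infty$: taking $H = F$ in the definition gives $m(F)\ge e(F)/v(F)$, hence $n^{-1/m(F)}\ge n^{-v(F)/e(F)}$, so $p\gg n^{-v(F)/e(F)}$. Next, I would expand
\[
\mathbb E[X^2] = \sum_{(F',F'')} \pr\big(F'\cup F'' \sub G_{n,p}\big),
\]
the sum over ordered pairs of (labelled) copies of $F$ in $K_n$, and group the pairs by the isomorphism type of the overlap graph $J := F'\cap F''$. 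The pairs with $e(J)=0$ (edge-disjoint copies) contribute at most $(\mathbb E X)^2$, because the two containment events then involve disjoint edge slots and are independent. For each overlap type $J$ with $e(J)\ge 1$ — and, $F$ being fixed, there are only finitely many — there are $\asymp n^{2v(F)-v(J)}$ such pairs, each with $\pr(F'\cup F''\sub G_{n,p}) = p^{2e(F)-e(J)}$, so this bucket contributes $\asymp n^{2v(F)-v(J)}p^{2e(F)-e(J)} \asymp (\mathbb E X)^2/\big(n^{v(J)}p^{e(J)}\big)$.

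The densest-subgraph hypothesis enters precisely here: because $J \sub F$, $e(J)/v(J)\le m(F)$, i.e.\ $v(J) - e(J)/m(F) \ge 0$, so $n^{v(J)}p^{e(J)} \ge \big(p\,n^{1/m(F)}\big)^{e(J)} \to \infty$ since $p \gg n^{-1/m(F)}$ and $e(J)\ge 1$. Hence every bucket with $e(J)\ge 1$ is $o\big((\mathbb E X)^2\big)$, and summing the boundedly many buckets gives $\mathbb E[X^2] = (1+o(1))(\mathbb E X)^2$. Therefore $\operatorname{Var}(X) = o\big((\mathbb E X)^2\big)$, and Chebyshev's inequality yields $\pr(X = 0) \le \operatorname{Var}(X)/(\mathbb E X)^2 \to 0$, i.e.\ $\pr(G_{n,p}\supseteq F)\to 1$. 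Together with the $0$-statement this shows $n^{-1/m(F)}$ is a threshold for containing $F$, which proves the theorem.

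The main obstacle is the variance estimate — controlling the over-count from overlapping pairs of copies of $F$ — and it is exactly the step that reveals \emph{why} the densest subgraph governs the threshold: a sparse $F$ containing a dense subgraph $J$ has its second moment inflated by pairs sharing a copy of $J$ until $p$ is large enough that copies of $J$ are already plentiful, which happens precisely at $p \asymp n^{-1/m(F)}$. Two routine points still need care: one should run the bucketing only over overlap types $J$ that can actually occur as $F'\cap F''$, and one must use that $F$ is fixed so that the number of overlap types and all the implied $\asymp$-constants are bounded uniformly in $n$ — without this the statement is false, as the footnote on Hamiltonian cycles warns.
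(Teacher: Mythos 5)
Your argument is correct and is the standard proof of Bollob\'as's theorem: the first-moment (Markov) bound on the densest subgraph $H^\ast$ for the $0$-statement, and the second-moment method for the $1$-statement, grouping overlapping pairs of copies of $F$ by the isomorphism type $J$ of their intersection and observing that the densest-subgraph condition makes each bucket with $e(J)\ge 1$ contribute $o\big((\mathbb E X)^2\big)$. The paper itself states this theorem without proof (citing Bollob\'as's book) and only gestures at the second-moment method in Examples~\ref{ex.triangle} and~\ref{ex.H}, so there is no in-paper proof to compare against; your writeup correctly supplies exactly the argument the paper is alluding to, including the correct observation that the finiteness of $F$ (hence of the set of overlap types and of the implied constants) is what makes the whole thing work.
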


For example, in Example \ref{ex.H}, the densest subgraph of $H$ is $H$ itself, so $p_0(H)$ is determined by the expectation of  $H$. This also determines $p_0(\tilde H)$ in Example \ref{example.tilde H}, since the densest subgraph of $\tilde H$ is again $H$.

Motivated by the preceding examples and Theorem \ref{thm.bol}, we give a formal definition of the \textit{expectation threshold}.

\begin{mydef} [Expectation threshold]\label{def.exp.thr} For any graph $F$, the expectation threshold for $F$ is
\[\pE(F)=\min\{p: \mathbb E[\text{number of $F'$ in $G_{n,p}$} ] \ge 1 \quad \forall F' \sub F\}.\]
\end{mydef}

Observe that
\beq{pe.lb} \pE(F) \lesssim p_0(F) \text{ for any $F$},\enq
and in particular, Theorem \ref{thm.bol} gives that
\[\pE(F) \asymp p_0(F) \text{ for any fixed $F$}.\]

\nin Note that this gives a beautiful answer to Question \ref{Q} whenever $\cF$ is a property of containing a fixed graph.

\begin{example}\label{ex.PM} Theorem \ref{thm.bol} characterizes threshold functions for any fixed graphs. To extend our exploration, in this example we consider a graph that \textit{grows} as $n$ grows. 
We say a graph $M$ is a \textit{matching} if $M$ is a disjoint union of edges. $M$ is a \textit{perfect matching} if $M$ is a matching that contains all the vertices. Write PM for perfect matching. 

\begin{figure}[h]\centering\includegraphics[height=1in]{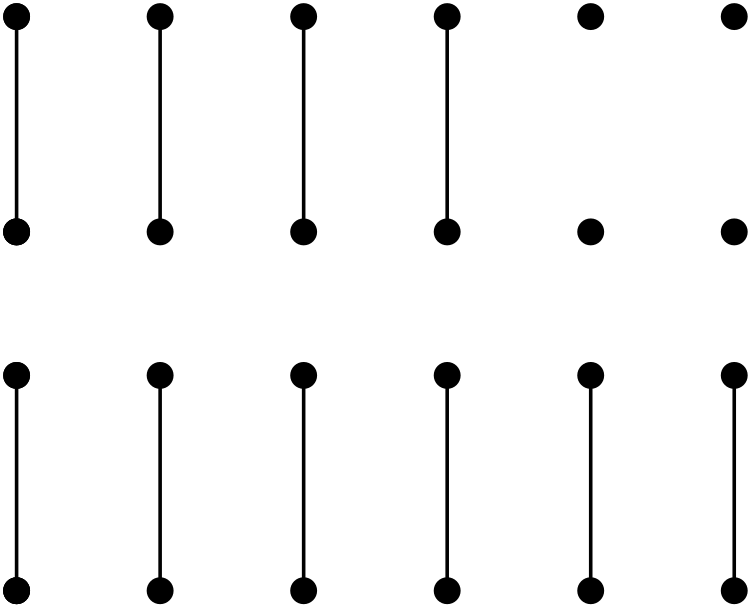}\caption{A matching (above) and a perfect matching (below)}\end{figure}

Keeping Question \ref{Q} in mind, let's first check the validity of Theorem \ref{thm.bol} to a perfect matching\footnote{We assume $2|n$ to avoid a trivial obstruction from having a perfect matching.}, which is not a fixed graph. By repeating a similar computation as before, we obtain that
\[\mathbb E[\text{number of PM's in $G_{n,p}$}] \asymp (n/e)^{n/2}p^{n/2},\]
which tends to $0$ if $p \ll 1/n.$ In fact, it is easy to compute (by considering all subgraphs of a perfect matching) that $\pE(\text{PM})\asymp 1/n$, so by \eqref{pe.lb},
\[p_0(\text{PM}) \gtrsim 1/n.\] 
However, unlike threshold functions for fixed graphs, $p_0(\text{PM})$ is \textit{not} equal to $\pE(\text{PM})$; it was proved by Erd\H{o}s and R\'enyi that 
\beq{ER.PM} p_0(\text{PM}) \asymp \frac{\log n}{n} \; (\gg \pE(\text{PM})).\enq

\begin{figure}[h!]\centering\includegraphics[height=.85in]{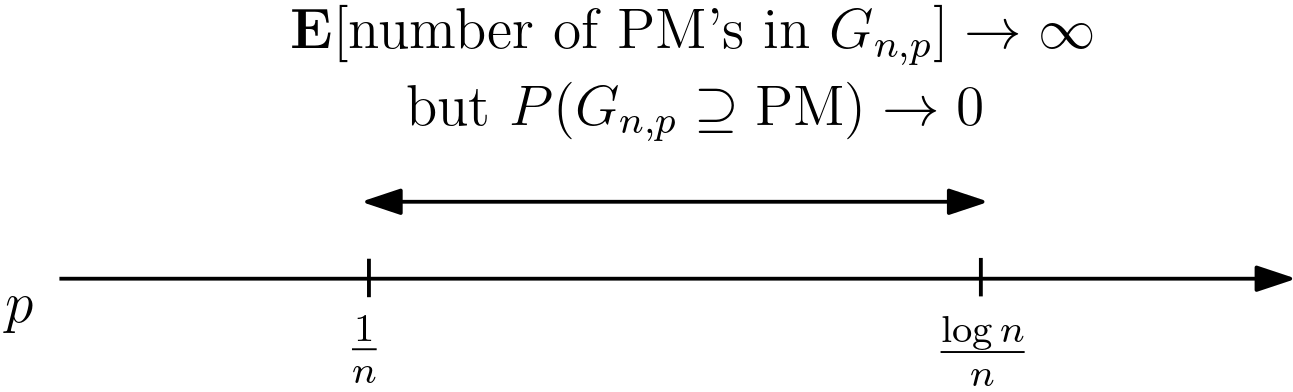}\caption{Gap between $p_0(\text{PM})$ and $\pE(\text{PM})$}\label{example.PM exist}\end{figure}

Notice that, in Figure \ref{example.PM exist}, what happens in the gap is \textit{fundamentally} different from that in Figure \ref{example.H tilde exist}. When $\frac{1}{n} \ll p \ll \frac{\log n}{n}$, $G_{n,p}$ contains huge numbers of PMs and \textit{all its subgraphs} "on average."  This means the absence of a subgraph of a PM is not the obstruction for $G_{n,p}$ from having a PM when $p\gg 1/n$. Then what happens here, and what's the real obstruction?

It turned out, we have
\[p_0(\text{PM}) \gtrsim \frac{\log n}{n}\]
for a very simple reason: the existence of an isolated vertex\footnote{a vertex not contained in any edges} in $G_{n,p}$. It is well-known that if $p \ll \frac{\log n}{n}$, then $G_{n,p}$ contains an isolated vertex with high probability (this phenomenon is elaborated in Example \ref{ex.CC}). Of course, if there is an isolated vertex in a graph, then this graph cannot contain a perfect matching.

So \eqref{ER.PM} says the very compelling fact that once $p$ is large enough that $G_{n,p}$ avoids isolated vertices, $G_{n,p}$ contains a perfect matching!
\end{example} 

The existence of an isolated vertex in $G_{n,p}$ is essentially equivalent to the \textit{Coupon-collector's Problem}:

\begin{example}[Coupon-collector]\label{ex.CC}

Each box of cereal contains a random coupon, and there are $n$ different types of coupons. If all coupons are equally likely, then how many boxes of cereal do we (typically) need to buy to collect all $n$ coupons?

The well-known answer to this question is that we need to buy $\gtrsim n\log n$ boxes of cereal. This phenomenon can be translated to $G_{n,p}$ in the following way: in $G_{n,p}$, the $n$ vertices are regarded as coupons. If a vertex is contained in a (random) edge in $G_{n,p}$, then that is regarded as being "collected." Note that if $p \ll \frac{\log n}{n}$, then typically the number of edges in $G_{n,p}$ is ${n \choose 2}p \ll n\log n$, and then the Coupon-collector's Problem says that there is typically an "uncollected coupon," which is an isolated vertex.   

\end{example}

Observe that, in Example \ref{ex.PM}, the "coupon-collector behavior" of $G_{n,p}$ provides another lower bound on $p_0(\text{PM})$, pushing up the first lower bound, $\pE(\text{PM})$, by $\log n$. And it turned out that this second (better) lower bound is equal to the threshold.

\begin{figure}[h!]\centering
\begin{tabular}{ | m{5em} | m{2.3cm}| m{2.3cm} | } 
 \hline
 \multicolumn{2}{|c|}{Lower bounds} & Threshold \\
\hline
Expectation threshold & $p_0 \gtrsim \pE$ & \multirow{2}{2.3cm}{$p_0 \asymp \pE\log n$} \\ 
 Coupon collector & $p_0 \gtrsim \pE\log n$ &\\
\hline
\end{tabular}
\caption{Bounds on $p_0(\text{PM})$}
\end{figure}

\nin \textbf{Hitting time.} Again, the existence of an isolated vertex in a graph trivially blocks this graph from containing any spanning subgraphs. In Example \ref{ex.PM}, we observed the compelling phenomenon that if $p$ is large enough that $G_{n,p}$ typically avoids  isolated vertices, then for those $p$, $G_{n,p}$ contains a perfect matching with high probability. Would this mean that, for $G_{n,p}$, isolated vertices are the \textit{only} barriers to the existence of spanning subgraphs?

To investigate this question, we consider a random process defined as below. Consider a sequence of graphs on $n$ vertices
\[G_0=\emptyset, G_1, G_2, \ldots, G_{n \choose 2}=K_n,\]
where $G_{m+1}$ is obtained from $G_m$ by adding a random edge.

\begin{figure}[h]\centering\includegraphics[height=.5in]{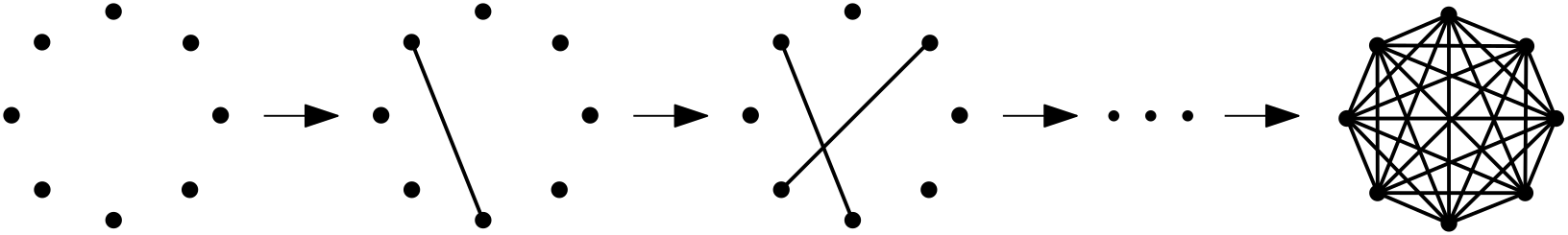}\caption{Random process}\end{figure}

\nin Then $G_m$, the $m$-th graph in this sequence, is the random variable that is uniformly distributed among all the graphs on $n$ vertices with $m$ edges. The next theorem tells us that, indeed, isolated vertices are \textit{the} obstructions for a random graph to having a perfect matching. 

\begin{theorem}[Erd\H{o}s-R\'enyi \cite{ER66}]\label{thm.hitting.PM}
Let $m_0$ denote the first time that $G_m$ contains no isolated vertices. Then, with high probability, $G_{m_0}$ contains a perfect matching.
\end{theorem}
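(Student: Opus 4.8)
The plan is to reduce the hitting-time assertion to a statement about the uniformly random graph $G_m$ for a \emph{fixed} window of values of $m$, and then to extract a deterministic sufficient condition for a perfect matching that $G_m$ satisfies throughout that window. First I would locate $m_0$. A first- and second-moment computation for the number of isolated vertices of $G_m$ shows that, for a slowly growing $\omega=\omega(n)\to\infty$ with $\omega=o(\log n)$, with high probability $G_{m_1}$ still has an isolated vertex while $G_{m_2}$ has none, where $m_1:=\tfrac{1}{2}n(\log n-\omega)$ and $m_2:=\tfrac{1}{2}n(\log n+\omega)$; hence whp $m_1\le m_0\le m_2$. Since $G_{m_0}$ is by definition the first $G_m$ of minimum degree $\delta\ge 1$, it now suffices to prove: with high probability, \emph{every} $G_m$ with $m_1\le m\le m_2$ and $\delta(G_m)\ge 1$ contains a perfect matching. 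The useful point is that the hypothesis $\delta\ge 1$, which is the only non-monotone ingredient, is handed to us for free exactly at $m=m_0$.

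Second, I would collect the structural features of $G_m$ in this window that hold whp. Call a vertex \emph{small} if its degree is below a large absolute constant. Then: (a) whp there are at most $(\log n)^{10}$ small vertices (verify this at the sparsest graph $G_{m_1}$; the small-vertex set only shrinks as edges are added); (b) whp the small vertices are pairwise at distance at least $5$, so that their short-range neighborhoods are disjoint — a Markov bound, since at density $\asymp\log n/n$ the expected number of nearby pairs of low-degree vertices is $o(1)$; and (c) an expansion property for \emph{middle-scale} sets: whp every $S$ with $(\log n)^{20}\le|S|\le n/2$ has $|N(S)\setminus S|>|S|$ (in fact much more). For (c) a union bound over $S$ does close, because in this size range the number of choices of $S$ and of a putative small neighborhood is beaten by the factor $(1-p)^{\Omega(|S|\,n)}$; I stress that the same union bound genuinely \emph{fails} for sets of constant or polylogarithmic size — near the connectivity threshold there really are atypical tiny sets of poor expansion — which is precisely why (a) and (b) are indispensable.

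Third, the deterministic core: a graph on an even number of vertices with $\delta\ge 1$ and properties (a)--(c) has a perfect matching. I would run the augmenting-path method. Take a maximal matching $M$; if it leaves two vertices $u,v$ unmatched, grow from $u$ the set $R_u$ of vertices reachable by $M$-alternating walks (the first edge at $u$ must be a non-matching edge, and one exists since $u$ is unmatched and not isolated). The first couple of steps might touch small vertices, but by (b) these are scattered, so the walk cannot be confined to a little cluster of them; once $|R_u|$ reaches $(\log n)^{20}$, the middle-scale expansion (c), applied alternately to the current frontier and to its $M$-partners, blows $R_u$ up past $n/2$. Running the same from $v$ and intersecting two sets of size greater than $n/2$ produces an $M$-augmenting $u$--$v$ path, contradicting maximality; hence $M$ is a perfect matching. (Equivalently, one can derive the contradiction from the Tutte--Berge formula: a barrier set $S$ with more odd components in $G-S$ than $|S|$ must be either small, which (a)--(c) forbid because few vertices can have \emph{all} their neighbors inside a small set, or large, forcing $G-S$ small and contradicting the expansion of its complement.)

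The step I expect to be the genuine obstacle is exactly this meeting of scales in the last two paragraphs. Because expansion near the connectivity threshold is not uniform down to small sets, one cannot simply quote a clean "every set expands" lemma; the low-degree vertices must be controlled by hand, and one has to verify that an alternating search which has wandered into the vicinity of a low-degree vertex still keeps growing rather than dead-ending — this is where the scattering estimate (b) is really used, and where the constants in (a), (b), (c) must be chosen to fit together. The reductions in the first paragraph and the moment estimates for isolated vertices are, by contrast, routine.
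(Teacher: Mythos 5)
The paper only \emph{quotes} this theorem, citing the original Erd\H{o}s--R\'enyi 1966 paper \cite{ER66}; no proof is given in the source, so there is nothing internal to compare your argument against. Assessed on its own, your outline (localize $m_0$ by two-moment estimates, collect whp pseudorandomness/expansion properties of $G_m$ across the window, then extract a perfect matching deterministically via alternating paths or Tutte--Berge) is the standard modern route to such hitting-time results, and is different in flavor from the 1966 original, which works directly with Tutte barriers and explicit counting. The localization, the count (a) of low-degree vertices, and the scattering estimate (b) are all sound, and you are right that the meeting of scales is where the real work lies.

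There is, however, a genuine gap in (c) and in how you propose to close the alternating-path argument. The claim that whp every $S$ with $(\log n)^{20}\le|S|\le n/2$ has $|N(S)\setminus S|>|S|$ is false near the top of the range: it is impossible at $|S|=n/2$ (since $|V\setminus S|=n/2$), and it fails for all $|S|$ within $\Theta(\sqrt n\,e^{-\omega/2})$ of $n/2$, because at edge density $p=(\log n+O(\omega))/n$ a fixed set $S$ of size $n/2$ typically leaves $\Theta(\sqrt n\,e^{-\omega/2})\to\infty$ outside vertices with no neighbor in $S$ (each misses $S$ with probability $(1-p)^{n/2}\approx n^{-1/2}e^{-\omega/2}$). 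Consequently expansion cannot ``blow $R_u$ up past $n/2$'' as you assert. In the alternating tree one has $|N(R_u)\setminus R_u|\le |R_u|-1$ and $|R_u|\le n/2$, and your (c) only pushes $|R_u|$ to about $n/2-\Theta(\sqrt n)$, at which point the pigeonhole intersection with $R_v$ is not available and the argument stalls. You need an extra device to finish: for instance, note that once $|R_u|$ is that close to $n/2$ the set $W:=V\setminus(R_u\cup B_u)\ni v$ is tiny and receives no edges from $R_u$, and then invoke connectivity or the $o(n)$ independence number of $G$ at this density; or route the whole thing through Tutte--Berge with a case split on the barrier size. A smaller inaccuracy: the union bound behind (c) does not ``genuinely fail for polylogarithmic'' sizes --- using $\pr(\mathrm{Bin}(N,q)\le a)\le e^{-Nq}(eNq/a)^a$ it closes for all $|S|\ge C\log n$; the genuine failure that (a)--(b) must repair is only at bounded scale.
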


We remark that Theorem \ref{thm.hitting.PM} gives much more precise information about $p_0(\text{PM})$ (back in $G_{n,p}$ setting). For example, Theorem \ref{thm.hitting.PM} implies:

\begin{theorem}
Let $p=\frac{\log n+c_n}{n}$. Then
\[\lim_{n \ra \infty} \pr(G_{n,p} \supseteq \text{PM} )= \begin{cases}
0 & \text{ if } c_n \ra -\infty\\
e^{-e^{-c}} & \text{ if } c_n \ra c\\
1 & \text{ if } c_n \ra \infty
\end{cases}.\]
\end{theorem}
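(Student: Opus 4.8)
The plan is to deduce the statement from Theorem~\ref{thm.hitting.PM} together with a standard computation of the number of isolated vertices. Write $X=X_n$ for the number of isolated vertices of $G_{n,p}$. Since a graph with an isolated vertex cannot contain a perfect matching, $\pr(G_{n,p}\supseteq\mathrm{PM})\le\pr(X=0)$ for every $p$, so the whole theorem follows once I establish
\[
\text{(i)}\quad \pr(G_{n,p}\supseteq\mathrm{PM})=\pr(X=0)+o(1),
\qquad
\text{(ii)}\quad \pr(X=0)\to\begin{cases}0 & c_n\to-\infty\\ e^{-e^{-c}} & c_n\to c\\ 1 & c_n\to\infty.\end{cases}
\]
Step (i) is where Theorem~\ref{thm.hitting.PM} enters; step (ii) is a routine first/second moment computation.

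For (i) I would use the usual coupling of $G_{n,p}$ with the random process. Let $\pi$ be a uniformly random ordering of the edges of $K_n$, generating $G_0\subseteq G_1\subseteq\cdots$ with $G_m$ the first $m$ edges, and let $N\sim\mathrm{Bin}(\binom n2,p)$ be independent of $\pi$; conditioning on the number of edges shows $G_N\stackrel{d}{=}G_{n,p}$. Let $m_0$ be the hitting time for having no isolated vertex (as in Theorem~\ref{thm.hitting.PM}) and let $m_{\mathrm{PM}}$ be the hitting time for containing a perfect matching. Along the process isolated vertices only disappear and a perfect matching, once present, persists, so $\{X(G_N)=0\}=\{N\ge m_0\}$ and $\{G_N\supseteq\mathrm{PM}\}=\{N\ge m_{\mathrm{PM}}\}$, while $m_0\le m_{\mathrm{PM}}$ always and $\{m_0<m_{\mathrm{PM}}\}=\{G_{m_0}\ \text{has no perfect matching}\}$. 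Since $N$ is independent of the process,
\[
0\ \le\ \pr(X=0)-\pr(G_{n,p}\supseteq\mathrm{PM})\ =\ \pr(N\ge m_0)-\pr(N\ge m_{\mathrm{PM}})\ \le\ \pr(m_0<m_{\mathrm{PM}})\ \to\ 0
\]
by Theorem~\ref{thm.hitting.PM}, which is precisely (i).

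For (ii), linearity of expectation gives $\E X=n(1-p)^{n-1}$, and substituting $p=(\log n+c_n)/n$ and expanding $\log(1-p)$ yields $\E X=e^{-c_n}(1+o(1))$; hence $\E X\to e^{-c},\ \infty,\ 0$ in the three regimes. If $c_n\to\infty$, the first moment bound $\pr(X\ge1)\le\E X\to0$ finishes it. If $c_n\to c$, I would compute the factorial moments $\E[(X)_r]=(n)_r(1-p)^{rn-\binom{r+1}{2}}\to(e^{-c})^r$ for each fixed $r$; by the method of moments this gives $X\xrightarrow{d}\mathrm{Poisson}(e^{-c})$ and hence $\pr(X=0)\to e^{-e^{-c}}$. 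If $c_n\to-\infty$, then for every fixed $M$ we have $p\le(\log n-M)/n=:p_M$ for large $n$, so $G_{n,p}\subseteq G_{n,p_M}$ in the monotone coupling, $X(G_{n,p})\ge X(G_{n,p_M})$, and $\pr(X(G_{n,p})=0)\le\pr(X(G_{n,p_M})=0)\to e^{-e^{M}}$; letting $M\to\infty$ forces $\pr(X=0)\to0$.

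The genuine input is Theorem~\ref{thm.hitting.PM}, which I am allowed to assume; granting it, the remaining obstacle is the bookkeeping in step~(ii) in the critical window $c_n\to c$ — verifying that \emph{all} factorial moments of $X$ converge to those of $\mathrm{Poisson}(e^{-c})$, which requires expanding $(1-p)^{rn-\binom{r+1}{2}}$ carefully enough to see that the correction tends to $1$ for each fixed $r$ — whereas the two endpoint regimes drop out of a one-line first-moment bound and a monotone coupling. The coupling in step~(i) is standard; its only delicate point is that the fluctuations of $N$ around $\binom n2p\sim\tfrac12 n\log n$ are irrelevant, precisely because Theorem~\ref{thm.hitting.PM} forces $N$ to exceed $m_{\mathrm{PM}}$ exactly when it exceeds $m_0$ outside an event of probability $o(1)$.
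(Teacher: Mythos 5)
Your proposal is correct and takes exactly the route the paper has in mind: the paper gives no argument beyond asserting that Theorem~\ref{thm.hitting.PM} implies the statement, and your coupling of $G_{n,p}$ with the edge process (to transfer the hitting-time result) together with the standard Poisson approximation for the number of isolated vertices is precisely how that implication is usually made rigorous.
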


We observe a similar phenomenon for Hamiltonian cycles. Notice that in order for a graph to contain a Hamiltonian cycle, a minimum requirement is that every vertex is contained in at least \textit{two} edges. The next theorem tells us that, again, this naive requirement is essentially the only barrier.

\begin{theorem}[Ajtai-Koml\'os-Szemer\'edi \cite{AKS}, Bollob\'as \cite{Bol84}]
Let $m_1$ denote the first time that every vertex in $G_m$ is contained in at least two edges. Then, with high probability, $G_{m_1}$ contains a Hamiltonian cycle.
\end{theorem}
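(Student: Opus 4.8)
\emph{Proof proposal.} The plan is to use P\'osa's rotation--extension technique, combined with the fact that the random graph process, in a short window straddling the hitting time $m_1$, both (i) possesses the expansion and connectivity that the technique requires and (ii) reveals enough ``extra'' random edges to serve as \emph{boosters}. Fix a slowly growing $\omega=\omega(n)\ra\infty$ and set $m^{-}=\tfrac{n}{2}(\log n+\log\log n-\omega)$ and $m^{+}=\tfrac{n}{2}(\log n+\log\log n+\omega)$. A second-moment estimate for the number of vertices of degree at most $1$ shows that with high probability $m^{-}\le m_1\le m^{+}$, so everything happens inside this window; and since the connectivity hitting time agrees with the minimum-degree-$1$ hitting time, which lies below $m^{-}$, the graphs $G_m$ for $m\ge m^{-}$ --- in particular $G_{m_1}$ --- are connected with high probability.

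The structural heart is to show that, after setting aside a small exceptional set $W$ of low-degree vertices (those of degree below $\eps_0\log n$ in $G_{m^{-}}$, say), the graph $G_{m_1}$ is a good ``expander'' in the sense P\'osa's lemma needs: small vertex sets disjoint from $W$ expand by a factor $2$, $W$ itself is small and its very-low-degree vertices are pairwise far apart, and (by the defining property of $m_1$) every vertex has degree at least $2$. One then runs the rotation--extension argument: a connected graph with this expansion that is not yet Hamiltonian admits many boosters --- non-edges whose addition lengthens a longest path or closes a Hamilton cycle. To make $G_{m_1}$ itself Hamiltonian, one reveals the edges the process adds inside the window one at a time; since this window contributes $\gg n$ new edges, the vast majority of which do not touch $W$ and hence behave like uniform random non-edges, with high probability a sufficient number of them turn out to be boosters before time $m_1$, so $G_m$ is Hamiltonian for some $m\le m_1$ and therefore $G_{m_1}$ is too.

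The main obstacle is everything concerning the low-degree vertices --- which is exactly right, since these vertices are precisely what can obstruct Hamiltonicity, and the theorem asserts that they stop being an obstruction at the very moment each of them reaches degree $2$. Concretely: $G_{m^{-}}$ still contains degree-$1$ vertices and so is \emph{not} yet an expander in the required sense, so the rotation argument cannot simply be started there; the degree-$1$ vertices must be woven into the cycle exactly as the process raises them to degree $2$, i.e.\ precisely by time $m_1$, and one must check that the bounded-degree vertices present at $m_1$ neither break the expansion of small sets nor rigidly constrain a longest path. This must be done while the relevant ``sprinkled'' edges are revealed around the \emph{random} stopping time $m_1$, so care is needed to argue that conditioning on the value of $m_1$ and on the few edges the process is forced to place at the low-degree vertices does not destroy the randomness of the remaining window edges. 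Finally, quantifying the expansion near $p\asymp\log n/n$ is itself delicate: at this sparsity one only gets expansion for sets avoiding $W$ and only up to a size well below $n$, so P\'osa's machinery has to be pushed through in this weakened regime rather than in the comfortable dense one.
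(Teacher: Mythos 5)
The paper you are reading is expository: it states this hitting-time theorem with citations to Ajtai--Koml\'os--Szemer\'edi and Bollob\'as and gives no proof, so there is no in-paper argument to compare your outline against. On its own terms, your sketch follows the classical line of attack from those cited works: localize $m_1$ in a window of width $\Theta(\omega n)$ around $\tfrac{n}{2}(\log n+\log\log n)$, show that outside a small exceptional set $W$ of low-degree vertices the graph has the vertex expansion that P\'osa's rotation--extension machinery needs, deduce that a non-Hamiltonian graph with this structure admits $\Theta(n^2)$ boosters, and sprinkle the $\gg n$ fresh edges of the window to hit $\Omega(n)$ of them before time $m_1$. You also correctly flag the two genuine difficulties: the degree-$1$ vertices present below $m_1$, and the conditioning problem caused by $m_1$ being a random stopping time.

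What your write-up does not yet contain is the substance behind those flags. You assert but do not prove the quantitative expansion lemma at density $\asymp\log n/n$ (including the size cutoff up to which sets avoiding $W$ expand by a factor $2$); you do not establish the structural facts about $W$ that the argument leans on (low-degree vertices are few, pairwise far apart, with essentially disjoint neighborhoods, so a long path can be routed through each of them and P\'osa rotations never get stuck there); and you only gesture at the coupling or multi-round exposure step that restores enough independence to the window edges after conditioning on the value of $m_1$ and on the forced edges incident to $W$. As it stands this is a correct and well-aimed roadmap that reproduces the shape of the Bollob\'as and Ajtai--Koml\'os--Szemer\'edi argument, but the three lemmas that carry the load are named rather than proved.
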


Returning to Question \ref{Q}, so far we have established that there are two factors that affect threshold functions. We first observed that $\pE$ always gives a lower bound on $p_0$. We then observed that, when it applies, the coupon-collector behavior of $G_{n,p}$ pushes up this expectational lower bound by $\log n$. Conjecture \ref{Conj.KKC.gr} below dauntingly proposes that there are \textit{no other factors} that affect thresholds.

\begin{conjecture}[Kahn-Kalai \cite{KK}]\label{Conj.KKC.gr}
For any graph $F$,
\[p_0(F) \lesssim \pE(F)\log n.\]
\end{conjecture}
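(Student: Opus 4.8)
The plan is to lift the problem into the abstract language of monotone set systems, reduce it to a probabilistic ``spread-implies-containment'' principle, and prove that principle by an iterated-exposure coupling. Set $q=\pE(F)$, let the ground set be $X=E(K_n)$, and let $\cF=\cF_n\subseteq 2^X$ be the increasing family of edge-sets that contain a copy of $F$; the goal is an absolute constant $K$ for which $p=Kq\log n$ already makes the random edge-set of $G_{n,p}$ lie in $\cF$ with probability tending to $1$. The first move is to turn the hypothesis ``$\pE(F)=q$'' into a probability measure. The natural candidate is the uniform measure $\mu$ on the copies of $F$ inside $K_n$: by Definition~\ref{def.exp.thr}, the requirement that every $F'\subseteq F$ satisfies $\E[\#F'\text{ in }G_{n,q}]\ge 1$ is exactly the input one needs in order to try to show that $\mu$ is \emph{$O(q)$-spread}, i.e.\ that $\mu(\{T:T\supseteq S\})\le (Cq)^{|S|}$ for every $S\subseteq X$ and an absolute constant $C$. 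Indeed, for $S$ not isomorphic to a subgraph of $F$ this is automatic, while for $S$ isomorphic to some $F'\subseteq F$ the left-hand side is the fraction of copies of $F$ containing a fixed copy of $F'$, which one would bound using $N(F')\,q^{e(F')}=\E[\#F'\text{ in }G_{n,q}]\ge 1$ together with a structural count of how many copies of $F'$ sit inside $F$. Note that every set in the support of $\mu$ has $e(F)\le\binom n2$ elements, so the logarithm of that size is $O(\log n)$.

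With the problem recast, the engine is the following abstract statement, which is the substance of the Park--Pham theorem (Theorem~\ref{thm.PP}), building on work of Frankston--Kahn--Narayanan--Park: \emph{if $\mu$ is a $q$-spread probability measure on a family $\cH\subseteq 2^X$ whose members all have size at most $\ell$, then for $p\ge Kq\log\ell$ a $p$-random $W\subseteq X$ contains a member of $\cH$ with high probability.} Feeding in the measure above, with $\ell=e(F)$, gives Conjecture~\ref{Conj.KKC.gr}. To prove the abstract statement I would reveal $W$ in $m=\Theta(\log\ell)$ independent rounds $W=W_1\cup\cdots\cup W_m$, each $W_i$ a $p'$-random subset with $p'\asymp q$ (so $mp'\asymp p$), while carrying along a growing ``committed'' set $\emptyset=Z_0\subseteq Z_1\subseteq\cdots$, where each $Z_t$ is a subset of some as-yet-undetermined member of $\cH$, i.e.\ $\mu(\{T:T\supseteq Z_t\})>0$. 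At step $t$ one looks at $\mu$ conditioned on containing $Z_{t-1}$, picks---greedily, as a maximal ``safe'' piece---a chunk of $W_t$ to absorb into $Z_{t-1}$, and argues that either the conditional measure has already concentrated on a single target sitting inside $W$, or else a constant fraction of what is still missing gets absorbed this round; after $\Theta(\log\ell)$ rounds $Z_t$ has grown into a full member of $\cH$ contained in $W$. Spreadness enters precisely to control the probability that a round fails---that $W_t$ offers no usable chunk---via a union bound of the form $\sum_S\mu(\{T:T\supseteq Z_{t-1}\cup S\})\,(p')^{|S|}$ over the ``uncapturable'' sets $S$, which is small because the conditional measure inherits $q$-spreadness and $p'\asymp q$.

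I expect the main obstacle to be this last ingredient: formulating ``maximal safe chunk'' so that (i) a constant fraction of the outstanding part of a $\mu$-typical target is captured per round---this is what makes $\Theta(\log\ell)$ rounds suffice rather than $\Theta(\ell)$, and is the single source of the $\log$ factor---and (ii) the spreadness estimate genuinely outweighs the number of ways a chunk can fail to be captured. This delicate counting is the heart of the Frankston--Kahn--Narayanan--Park argument and, in its sharp form, of Park--Pham, and I would expect essentially all of the effort to live there. One point peculiar to the graph version is worth flagging: the coupling natively bounds $p_0(F)$ by $\log n$ times the size of \emph{whatever} spread measure one can build---that is, by the fractional expectation threshold---and whether this can always be taken comparable to $\pE(F)$ with an absolute constant, which is what Conjecture~\ref{Conj.KKC.gr} as stated asks for, is known for fixed graphs (Theorem~\ref{thm.bol}), for perfect matchings, and for Hamilton cycles, but open in general; that gap is exactly why the graph version remains a conjecture.
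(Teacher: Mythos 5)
The statement you are attempting is Conjecture~\ref{Conj.KKC.gr}, which the paper explicitly leaves open: ``Conjecture~\ref{Conj.KKC.gr} is still wide open even after the `abstract version' of this conjecture is proved.'' There is therefore no proof in the paper to compare against, and none is currently known. To your credit, your writeup is honest about this; what it offers is an accurate diagnosis of where the difficulty lies, not a proof.

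The genuine gap is exactly the one you flag in your final paragraph. The iterated-exposure, spread-measure machinery you sketch is the Frankston--Kahn--Narayanan--Park / Park--Pham engine, and what it yields is Theorem~\ref{thm.PP}, namely $p_c(\cF)\le Kq(\cF)\log\ell(\cF)$, or via the spread-measure route $p_c(\cF)\le Kq_f(\cF)\log\ell(\cF)$. To convert that into $p_0(F)\lesssim \pE(F)\log n$ you would need $q(\cF_F)\lesssim\pE(F)$, equivalently an $O(\pE(F))$-spread probability measure supported on copies of $F$, and the paper records that ``whether $\pE(F)\asymp q(\cF_F)$ is unknown.'' Your proposed verification that the uniform measure $\mu$ on copies of $F$ is $O(\pE(F))$-spread does not supply this: the expectation conditions $N(F')q^{e(F')}\ge 1$ for $F'\subseteq F$ control a double-counted \emph{average} of $\mu(\{T:T\supseteq S\})$ over placements $S\cong F'$, not the worst case over $S$ that spreadness requires, and there is no known way to upgrade one to the other when $F$ grows with $n$. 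So the step ``show $\mu$ is $O(q)$-spread'' is the open problem in disguise, not a routine count; everything downstream of it is sound, and your closing remark correctly identifies the obstruction, but what you have is a program rather than a proof.
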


\nin Conjecture \ref{Conj.KKC.gr} is still wide open even after the "abstract version" of this conjecture is proved. We close this section with a very interesting example in which $p_0$ lies strictly in between $\pE$ and $\pE \log n$. A \textit{triangle factor} is a (vertex-) disjoint union of triangles that contains \textit{all} the vertices.

\begin{figure}[h]\centering\includegraphics[height=0.35in]{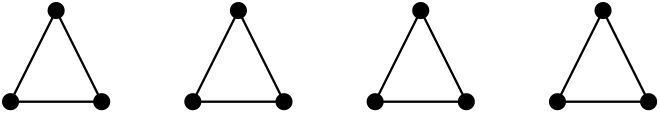}\caption{A triangle factor}\end{figure}

The question of a threshold function for a triangle-factor\footnote{or, more generally, a $K_r$-factor for any fixed $r$} was famously solved by Johansson, Kahn, and Vu \cite{JKV}. Observe that an obvious obstruction for a graph from having a triangle factor is the existence of a vertex that is not contained in any triangles. The result below is the hitting time version of \cite{JKV}, which is obtained by combining \cite{Kahn} and \cite{Heckel}.

\begin{theorem}
Let $m_2$ denote the first time that every vertex in $G_m$ is contained in at least one triangle. Then, with high probability, $G_{m_2}$ contains a triangle factor.
\end{theorem}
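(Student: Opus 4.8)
The plan is to route everything through the standard coupling between the edge-by-edge process $(G_m)_{m\ge 0}$ and the binomial random graph $G_{n,p}$ with $p \approx 2m/\binom n2$, so that it suffices to prove the analogous statements for $G_{n,p}$ and then transfer. The scale at which both relevant events happen is $p^{*} = c^{*}(\log n)^{1/3} n^{-2/3}$, and — crucially for a hitting-time statement — the sharp constant $c^{*}$ will have to be tracked.

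\textbf{Step 1 (locating $m_2$: the coupon-collector computation).} Let $Y_p$ count the vertices of $G_{n,p}$ lying in no triangle. For fixed $v$, the number of triangles through $v$ is essentially Poisson with mean $\sim \binom{n-1}{2}p^{3}\sim n^{2}p^{3}/2$ (a Janson-type estimate), so $\pr(v\text{ lies in no triangle}) = e^{-(1+o(1))n^{2}p^{3}/2}$ and $\E Y_p = n\,e^{-(1+o(1))n^{2}p^{3}/2}$, which crosses from large to $o(1)$ exactly as $p$ passes $p^{*}$. A second-moment estimate — ``$u$ bad'' and ``$v$ bad'' are nearly independent, their witnessing triangles being almost disjoint — shows $Y_p$ is concentrated; transferring to the process, whp $m_2$ falls in a short window of edge-counts around $m^{*}:=\binom n2 p^{*}$, with $\gg 1$ bad vertices just below it and none just above. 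This is precisely the ``coupon-collector'' barrier discussed before Conjecture~\ref{Conj.KKC.gr}, now with a sharp constant.

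\textbf{Step 2 (a triangle factor at the sharp threshold).} Here one feeds in the Johansson--Kahn--Vu theorem in its sharp-threshold form, obtained by combining the cited results of Kahn and Heckel: for $p = (1+o(1))p^{*}$ — and equally for $G_{N,p}$ with $N = n - O(1)$ — the graph contains a triangle factor whp. The reason a hitting-time statement can even be true is that the constant $c^{*}$ at which bad vertices disappear (Step 1) \emph{equals} the one at which the triangle factor appears.

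\textbf{Step 3 (splicing at time $m_2$).} Condition on the history up to time $m_2$. By Step 1 we may assume that just before $m_2$ there is (essentially) a unique bad vertex $v$, and that the added edge $e_{m_2}$ completes a triangle $T = vab$ — this is forced by the definition of $m_2$. It then remains to find a triangle factor of $G_{m_2}-\{v,a,b\}$, since adjoining $T$ finishes. Deleting three vertices perturbs none of the parameters above beyond lower-order terms, so after the coupling $G_{m_2}-\{v,a,b\}$ is distributionally close to $G_{N,p}$ with $N = n-3$ and $p$ just above the sharp threshold for $N$ vertices; Step 2 supplies the factor whp. Summing the small failure probabilities over the window from Step 1 closes the argument.

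\textbf{The main obstacle.} Everything hinges on Step 2. The plain JKV theorem only gives a triangle factor for $p \gg p^{*}$, i.e.\ with unbounded multiplicative slack, whereas at the hitting time $p$ sits essentially \emph{on} the threshold; one genuinely needs that the triangle-factor threshold and the ``every vertex lies in a triangle'' threshold agree \emph{up to the constant} (Heckel's contribution), together with a robust high-probability — not merely positive-probability — existence statement at that constant, so that one can afford the conditioning on the entire history and the deletion of $\{v,a,b\}$ in Step 3. Making that conditioning legitimate — arguing that conditioning on ``$m_2 = m$ with last bad vertex $v$'' does not drag the remaining graph out of the regime where Step 2 applies, and handling the possibility that a bounded number of bad vertices vanish at once — is the other genuinely delicate point.
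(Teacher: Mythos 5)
The paper does not prove this theorem. It states it as ``the hitting time version of \cite{JKV}, which is obtained by combining \cite{Kahn} and \cite{Heckel},'' and moves on. So there is no in-paper proof to check you against; the theorem \emph{is} the main result of Heckel--Kaufmann--M\"uller--Pasch (granting Kahn's sharp threshold for Shamir-type problems), and your sketch is really an expansion of that citation rather than an independent argument.

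Within the sketch there are two real problems. First, Step~2 is not a lemma you can feed in: the ``robust, high-probability existence of a triangle factor at $p=(1+o(1))p^*$'' is precisely what \cite{Kahn} and \cite{Heckel} prove, and it is the hard content of the theorem. Treating it as known makes the proof circular --- you have restated, not derived, the result. Second, Step~3 does not close. If you condition on the full history up to time $m_2$, then $G_{m_2}$ is a fixed graph with no residual randomness, so the phrase ``distributionally close to $G_{N,p}$'' has no meaning. What you presumably intend is to condition only on $m_2=m$ and on the identity of the last bad vertex $v$; but this conditioning event constrains the triangle structure at \emph{every} vertex (each of the other $n-1$ vertices must already be covered by a triangle at time $m-1$, while $v$ must not be), and there is no a priori reason the conditional law of $G_m\setminus\{v,a,b\}$ resembles a binomial random graph at the sharp threshold. ``Deleting three vertices perturbs none of the parameters'' is a heuristic, not an argument, and it also skips a concrete obstruction: removing $a$ and $b$ can create fresh vertices lying in no triangle of the remaining graph, so adjoining $T=vab$ to a factor of $G_{m_2}-\{v,a,b\}$ is not automatic. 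Handling exactly this conditioning robustly --- typically via a two-round exposure or sprinkling coupling rather than naive history-conditioning --- is the technical core of \cite{Heckel}, which your outline flags as ``delicate'' but does not supply.
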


\nin The above theorem implies that
\[p_0(\text{triangle factor}) \asymp \pE(\text{triangle factor})\cdot (\log n)^{1/3}.\]

\section{The Expectation Threshold Theorem}\label{sec.ETT}

The abstract version of the Kahn--Kalai Conjecture, which is the main content of this section, is recently proved in \cite{PPT}. We remark that the discussion in this section is not restricted by the languages in graph theory anymore.

We introduce some more definitions for this general setting. Given a finite set $X$, the \textit{$p$-biased product probability measure}, $\mu_p$, on $2^X$ is defined by
\[\mu_p(A)=p^{|A|}(1-p)^{|X \setminus A|} \quad \forall A \sub X.\]
We use $X_p$ for the random variable whose law is
\[\pr(X_p=A)=\mu_p(A) \quad \forall A \sub X.\]
In other words, $X_p$ is a "$p$-random subset" of $X$, which means $X_p$ contains each element of $X$ with probability $p$ independently. 

\begin{example}
If $X={[n] \choose 2}$, then
\[X_p=G_{n,p}.\]
So $G_{n,p}$ is a special case of the random model $X_p$.
\end{example}

We redefine increasing property in our new set-up. A \textit{property} is a subset of $2^X$, and $\cF \sub 2^X$ is an \textit{increasing property} if 
\[B \supseteq A \in \cF \Rightarrow B \in \cF. \]
Informally, a property is increasing if we cannot "destroy" this property by adding elements. Note that in this new definition, $\cF$ is not required to possess the strong symmetry as in increasing \textit{graph} properties; for example, there is no longer a requirement like "invariant under graph isomorphisms."

Observe that $\mu_p(\cF) (:=\sum_{A \in \cF} \mu_p(A)=\pr(X_p \in \cF))$ is a polynomial in $p$, thus continuous. Furthermore, it is a well-known fact that $\mu_p(\cF)$ is strictly increasing in $p$ unless $\cF = \emptyset, 2^X$ (see Figure \ref{fig.mup}). For the rest of this section, we always assume $\cF \ne \emptyset, 2^X$.

\begin{figure}[h!]\centering\includegraphics[height=1.6in, width=1.9in]{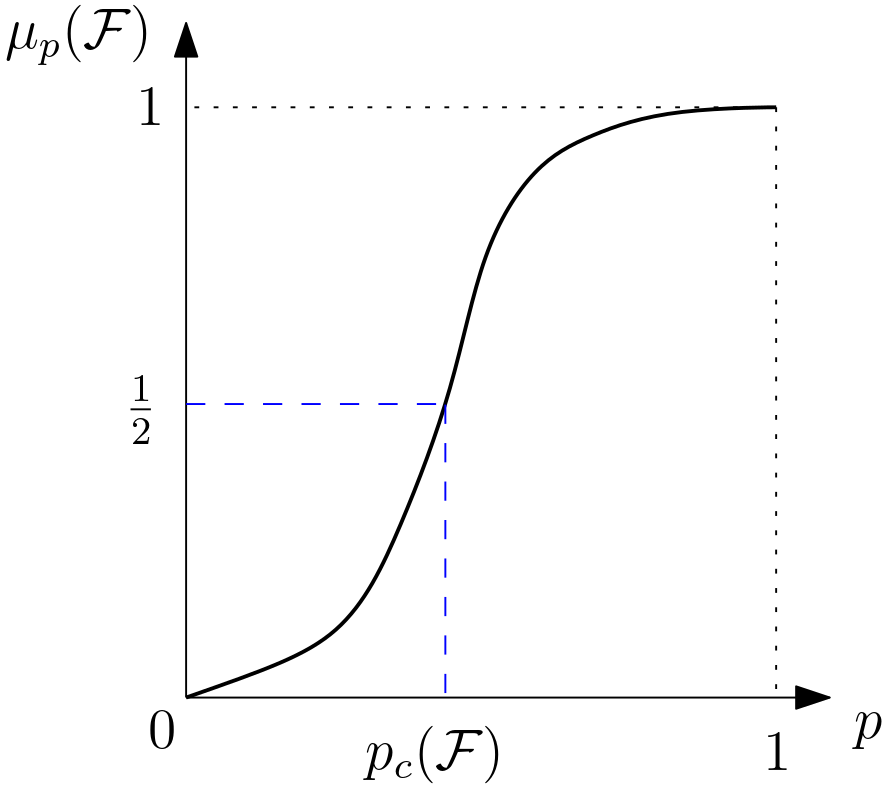}\caption{$\mu_p(\cF)$ for $p \in [0,1]$, and $p_c(\mathcal F)$}\label{fig.mup}\end{figure}

Because $\mu_p(\cF)$ is continuous and strictly increasing in $p$, there exists a unique $p_c(\cF)$ for which $\mu_{p_c}(\cF)=1/2$. This $p_c(\cF)$ is called \textit{the threshold} for $\cF$. 

\begin{remark}
The definition of $p_c(\cF)$ does not require sequences. Incidentally, by Theorem~\ref{thm.BT}, for any increasing \textit{graph} property $\cF (=\cF_n)$, $p_c(\cF)$ is an (Erd\H{o}s-R\'enyi) threshold function for $\cF$.
\end{remark}

For a general increasing property $\cF \sub 2^X$, the definition of $\pE$ is not applicable anymore. Kahn and Kalai introduced the following generalized notion of the expectation threshold, which is also introduced by Talagrand \cite{Talagrand}.

\begin{mydef}\label{def.exp.thr.abs}
Given a finite set $X$ and an increasing property $\cF\sub 2^X$, $q(\cF)$ is the maximum of $q \in [0,1]$ for which there exists $\cG \sub 2^X$ satisfying the following two properties.
\begin{enumerate}[(a)]
\item Each $A \in \cF$ contains some member of $\cG$.
\item $\sum_{S \in \cG} q^{|S|}\le 1/2$.
\end{enumerate}
A family $\cG \sub 2^X$ that satisfies (a) is called a \emph{cover} of $\cF$.
\end{mydef}

\begin{remark}
The definition of $q(\cF)$ eliminates the "symmetry" requirement -- which seems very natural (and seemingly easier to deal with) in the context of thresholds for subgraph containments -- from the definition of $\pE$. It is worth noting that this flexibility is crucially used in the proof of Theorem~\ref{thm.PP} in \cite{PPT}. 
\end{remark}

The next proposition says that $q(\cF)$ still provides a lower bound on the threshold.

\begin{prop}\label{prop.lb}
For any finite set $X$ and increasing property $\cF \sub 2^X$,
\[q(\cF) \le p_c(\cF).\]
\end{prop}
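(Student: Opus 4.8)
The plan is to show that if $q$ is admissible in the sense of Definition~\ref{def.exp.thr.abs} -- witnessed by a cover $\cG$ with $\sum_{S \in \cG} q^{|S|} \le 1/2$ -- then $\mu_q(\cF) \le 1/2$, and then invoke monotonicity of $p \mapsto \mu_p(\cF)$ to conclude $q \le p_c(\cF)$. Taking the maximum over all admissible $q$ then gives $q(\cF) \le p_c(\cF)$.

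First I would bound $\mu_q(\cF)$ by a union bound over the cover. By property (a), every $A \in \cF$ contains some $S \in \cG$, so the event $\{X_q \in \cF\}$ is contained in the event $\{X_q \supseteq S \text{ for some } S \in \cG\}$. Hence
\[
\mu_q(\cF) = \pr(X_q \in \cF) \le \pr\big(\exists S \in \cG : X_q \supseteq S\big) \le \sum_{S \in \cG} \pr(X_q \supseteq S).
\]
Then I would compute $\pr(X_q \supseteq S) = q^{|S|}$, which is immediate from the definition of $X_q$ as a $q$-random subset (each of the $|S|$ elements of $S$ lies in $X_q$ independently with probability $q$). Combining with property (b),
\[
\mu_q(\cF) \le \sum_{S \in \cG} q^{|S|} \le \frac{1}{2}.
\]

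Finally, since $\mu_p(\cF)$ is continuous and strictly increasing in $p$ (as noted in the text, using $\cF \ne \emptyset, 2^X$) and equals $1/2$ precisely at $p = p_c(\cF)$, the inequality $\mu_q(\cF) \le 1/2 = \mu_{p_c(\cF)}(\cF)$ forces $q \le p_c(\cF)$. As this holds for every admissible $q$, it holds for the supremum, giving $q(\cF) \le p_c(\cF)$.

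There is no real obstacle here: the argument is a one-line union bound plus the monotonicity fact already established in the excerpt. The only point deserving a word of care is the passage from "$\mu_q(\cF) \le 1/2$ for all admissible $q$" to "$q(\cF) \le p_c(\cF)$" -- one should note that $q(\cF)$ is attained (the max in Definition~\ref{def.exp.thr.abs} is over a compact set and the relevant conditions behave well), or else simply run the bound at $q(\cF)$ directly with its witnessing cover, which is cleanest.
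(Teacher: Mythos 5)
Your argument is correct and is essentially the same as the paper's: you bound $\mu_q(\cF)$ by a union bound over the cover $\cG$, obtaining $\sum_{S \in \cG} q^{|S|} \le 1/2$, and then conclude $q \le p_c(\cF)$ from the definition of $p_c$ (the paper writes this double sum $\sum_{S \in \cG}\sum_{B \supseteq S}\mu_q(B)$, which is exactly your union bound in disguise). The paper also runs the argument directly at $q = q(\cF)$ with its witnessing cover, which is the streamlined option you mention at the end.
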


\begin{proof}[Justification.]
Write $q=q(\cF)$. By the definition of $p_c(\cF)$, it suffices to show that $\mu_q(\cF)\le 1/2$. We have
\[\begin{split}\mu_q(\cF) \le \sum_{S \in \cG} \sum_{S \sub A \in \cF} \mu_q(A) &\le \sum_{S \in \cG} \sum_{B \supseteq S} \mu_q(B) \\
& =\sum_{S \in \cG} q^{|S|} \le 1/2\end{split}\]
where the first inequality uses the fact that $\cG$ covers $\cF$.
\end{proof}

For a graph $F$, write $\cF_F$ for the increasing graph property of containing a copy of $F$. The example below illustrates the relationship between $\pE(F)$ and $q(\cF_F)$.

\begin{example}[Example \ref{example.tilde H} revisited]\label{ex'} For $X={[n] \choose 2}$ (so $X_p=G_{n,p}$) and the increasing property $\cF=\{\text{contain } \tilde H\} (\sub 2^X)$, 
\[\cG_1:=\{\text{all the (labelled) copies of $\tilde H$ in $K_n$}\}\]
is a cover of $\cF$. The left-hand side of Definition~\ref{def.exp.thr.abs} (b) is
\[\begin{split}  \sum_{S \in \cG_1} q^{|S|} = &(\text{number of $\tilde H$'s in $K_n$}) \\ & \times \pr(\text{each copy of $\tilde H$ is present in $G_{n,p}$}),\end{split}\]
which is precisely the expected number of $\tilde H$'s in $G_{n,p}$. Combined with Proposition \ref{prop.lb}, the above computation gives that $n^{-5/6} \lesssim p_c(\cF)$.

On the other hand, we have (implicitly) discussed in Example~\ref{example.tilde H} that there is another cover that gives a lower bound better than that of $\cG_1$; if we take
\[\cG_2:=\{\text{all the (labelled) copies of $H$ in $K_n$}\},\]
then the computation in Definition \ref{def.exp.thr.abs} (b) gives that $n^{-4/5} \lesssim p_c(\cF)$.

\end{example}

The above discussion shows that, for any (not necessarily fixed) graph $F$,
\[\pE(F) \lesssim q(\cF_F)\]
(whether $\pE(F) \asymp q(\cF_F)$ is unknown). The abstract version of the Kahn--Kalai Conjecture is similar to its graph version, with $\pE$ replaced by $q(\cF)$. This is what's proved in \cite{PPT}.

\begin{theorem}[Park-Pham \cite{PPT}, conjectured in \cite{KK, Talagrand}]\label{thm.PP}
    There exists a constant $K$ such that for any finite set $X$ and increasing property $\cF \sub 2^X$,
    \[p_c(\cF) \le K q(\cF)\log \ell(\cF)\]
where $\ell(\cF)$ is the size of a largest minimal element of $\cF$.
\end{theorem}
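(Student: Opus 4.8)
The plan is to reduce Theorem~\ref{thm.PP} to a "bootstrapping" statement about covers, following the strategy of \cite{PPT}: rather than trying to find a small number of edges of $X_p$ that directly witness membership in $\cF$, I would repeatedly show that a $p$-random set, together with the promise that it fails to contain any member of a cheap cover, can be used to refine that cover into a \emph{cheaper} one. Concretely, set $p = K q(\cF)\log\ell$ with $\ell = \ell(\cF)$ and $q = q(\cF)$, and let $\cG$ be an optimal cover, so $\sum_{S\in\cG} q^{|S|}\le 1/2$ and every $A\in\cF$ contains some $S\in\cG$. The goal is to show $\mu_p(\cF)$ is close to $1$; equivalently, that a $p$-random $W$ almost surely contains a member of $\cG$ (hence of $\cF$). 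Suppose not: then with non-negligible probability $W$ avoids every $S\in\cG$. I would exploit this by sampling $W$ as a union $W_1\cup\cdots\cup W_t$ of independent $p'$-random sets with $t p' \approx p$ (so $t \asymp \log\ell$ and $p' \asymp q$), and run an iterative argument over the $t$ rounds.

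The key step is the \emph{cover-refinement lemma} (the heart of \cite{PPT}): given a cover $\cG$ with $\sum_{S\in\cG} q^{|S|}\le 1/2$ all of whose minimal elements have size $\le \ell$, and given a fresh $p'$-random set $R$ with $p' = Cq$ for a suitable absolute constant $C$, with probability at least (say) $3/4$ the set $R$ "absorbs" enough of each $S\in\cG$ that there is a new cover $\cG'$ of the residual property $\{A : A\cup R \in \cF\}$ with the \emph{halved} weight bound $\sum_{S'\in\cG'} q^{|S'|} \le 1/4$ and with $\ell(\cG')\le \ell/2$ — i.e., one round of exposure both halves the total weight and halves the maximum minimal-element size. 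The mechanism: for each $S\in\cG$, either $S$ is "small" (size below some threshold $w^\ast$, of which there are few by the weight bound, so we can afford to keep all of them), or $S$ is "large," in which case a Chernoff/union-bound argument shows $|S\cap R|$ is typically a constant fraction of $|S|$, so replacing $S$ by the collection of sets $S\setminus R'$ over the relevant choices — weighted by $q$ to the reduced size — loses a controlled factor that is beaten by the $1/2 \to 1/4$ slack when $C$ is large enough. One must be careful that the new family is still a cover of the residual property; this is where the increasing property hypothesis is used, together with the observation that if $A\cup R\in\cF$ then $A\cup R$ contains some original $S\in\cG$, so $A$ contains $S\setminus R$.

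Given the refinement lemma, the proof finishes by iteration: after $t = \lceil \log_2 \ell \rceil$ rounds the maximum minimal-element size drops below $1$, meaning the empty set lies in the current cover, i.e. the residual property is all of $2^X$ — equivalently $W_1\cup\cdots\cup W_t$ already contains a member of $\cF$. Taking a union bound over the $O(\log\ell)$ bad events (each of probability $\le 1/4$) still leaves this happening with probability $\ge 1 - O(\log\ell)\cdot\tfrac14$ — so one actually runs the rounds in blocks, or replaces $1/4$ by $\ell^{-2}$ at the cost of another constant factor in $C$, to drive the total failure probability below $1/2$; then $\mu_p(\cF)\ge 1/2$ and hence $p_c(\cF)\le p = Kq\log\ell$ for the resulting absolute constant $K$. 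The main obstacle — and the genuinely new idea in \cite{PPT} — is the refinement lemma, specifically verifying that the refined family $\cG'$ is simultaneously (i) a legitimate cover of the residual property, (ii) weight-contracting, and (iii) size-contracting, all at once; balancing the threshold $w^\ast$ and the constant $C$ so that the "few small sets" and "typical large-set intersection" estimates both go through is the delicate part. Everything else — the Chernoff bounds, the union bounds, the geometric iteration — is routine once that lemma is in hand.
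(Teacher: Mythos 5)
The paper you're summarizing here is expository and does not itself prove Theorem~\ref{thm.PP}; it only states the result and cites \cite{PPT} (with a pointer to the \cite{ALWZ}/\cite{FKNP} lineage). So there is no in-paper proof to match against, but your proposal can be assessed on its own. The high-level scaffolding is right and does match the known argument: expose a $p$-random set as a union of $\Theta(\log \ell)$ independent $\Theta(q)$-density rounds, maintain a cover of the residual property, halve the maximum cover-element size each round, finish after about $\log_2\ell$ rounds. But the key lemma as you state it is false, and the finishing step is logically backwards; these are not just sloppy details, they break the argument.

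First, your cover-refinement lemma is missing the essential dichotomy ``either the exposed set already contains a member of $\cF$, or the cover refines.'' This escape clause is forced. If $\cG'$ covers $\cF_R:=\{A:A\cup R\in\cF\}$ with $\sum_{S'\in\cG'}q^{|S'|}<1$, then $\emptyset\notin\cG'$ (it alone would contribute $q^0=1$); and if $\emptyset\notin\cG'$ while $\cG'$ covers $\cF_R$, then $\emptyset\notin\cF_R$, i.e.\ $R\notin\cF$. So an unconditional refinement lemma, holding with probability $\ge 3/4$ no matter what $R$ is, would certify $W\notin\cF$ with substantial probability at every round --- exactly the opposite of what is wanted above the threshold, and also plainly false once $\Pr(R\supseteq S)>0$ for some $S\in\cG$. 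The real lemma in \cite{PPT} is a dichotomy, and the iteration succeeds precisely by being pushed into the ``found a member'' branch.

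Second, your termination step misreads what the cover controls. You claim that once $\ell(\cG')<1$ ``the maximum minimal-element size drops below $1$, meaning the empty set lies in the current cover, i.e.\ the residual property is all of $2^X$.'' But (a) the max size of a cover element is a \emph{lower} bound on the sizes of minimal elements of $\cF_W$ (every minimal $A\in\cF_W$ contains some $S\in\cG'$), so small covers say nothing about $\ell(\cF_W)$; (b) $\emptyset\in\cG'$ is forbidden by any weight bound $<1$, as above; and (c) even if $\emptyset\in\cG'$, a cover containing $\emptyset$ covers every increasing family, so this would be vacuous. What actually happens is that a cover with max element size $0$ and weight $<1$ must be the empty family, which is impossible since $X\in\cF_W$ (as $\cF$ is increasing and nonempty) --- a contradiction. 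That contradiction is the engine that forces the dichotomy to land on the ``found a member'' side; without the escape clause you have nowhere to put it.

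Third, your weight bookkeeping goes the wrong way. Since $q<1$, replacing $S$ by $S\setminus R$ gives $q^{|S\setminus R|}>q^{|S|}$, so the naive refined cover has \emph{larger} weight, not smaller. You acknowledge a ``controlled factor'' but claim it's absorbed by the ``$1/2\to 1/4$ slack,'' yet tightening the bound from $1/2$ to $1/4$ is a stricter demand, not slack. In fact I do not believe both the weight and the size can be shrunk simultaneously; the actual argument keeps the weight bounded while the size shrinks geometrically, and the hard part (where the sunflower-lemma ideas enter) is exactly controlling the weight blowup via a careful pruning/counting of ``fragments,'' not a generic Chernoff step. Finally, you correctly flag that a union bound over $\log\ell$ rounds each failing with probability $1/4$ is too lossy, but the proposed fix (pushing failure probability to $\ell^{-2}$) needs to be built into the key lemma itself, which you have not done. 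In sum: right architecture, but the central lemma as stated is false, and the closing inference does not follow from the stated hypotheses.
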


Theorem \ref{thm.PP} is extremely powerful; for instance, its immediate consequences include historically difficult results such as the resolutions of \textit{Shamir's Problem} \cite{JKV} and the \textit{"Tree Conjecture"} \cite{Montgomery}. Here we just mention one smaller consequence:

\begin{example}
If $F$ is a fixed graph, then $\ell(\cF_F)$ is the number of edges in $F$, thus a constant. So in this case Theorem \ref{thm.PP} says $p_c(\cF)\asymp q(\cF)$, which recovers Theorem~\ref{thm.bol}.
\end{example}

\nin \textbf{Sunflower Conjecture, and "fractional" Kahn-Kalai.} The proof of Theorem \ref{thm.PP} is strikingly easy given its powerful consequences. The approach is inspired by remarkable work of Alweiss, Lovett, Wu, and Zhang \cite{ALWZ} on the \textit{Erd\H{o}s-Rado Sunflower Conjecture}, which seemingly has no connection to threshold phenomena. This totally unexpected connection was first exploited by Frankston, Kahn, Nayaranan, and the author in \cite{FKNP}, where a "fractional" version of the Kahn-Kalai conjecture (conjectured by Talagrand \cite{Talagrand}) was proved, illustrating how two seemingly unrelated fields of mathematics can be nicely connected!

Note that $q(\cF)$ is in theory hard to compute. For instance, in Example \ref{example.tilde H}, we can estimate $\pE(\tilde H)$ by finding $F \sub \tilde H$ with the maximum $e(F)/v(F)$. On the other hand, to compute $q(\cF_{\tilde H})$, we should in principle consider all possible covers of $\cF_{\tilde H}$, which is typically not feasible. The good news is that there is a convenient way to find an upper bound on $q(\cF)$, which is often of the correct order. Namely, Talagrand~\cite{Talagrand} introduced a notion of \textit{fractional expectation threshold}, $q_f(\mathcal F)$, satisfying \[q(\cF) \le q_f(\cF) \le p_c(\cF)\] for any increasing property $\cF.$ He conjectured (and it was proved in \cite{FKNP}) that the (abstract) Kahn-Kalai Conjecture (now Theorem~\ref{thm.PP}) holds with $q_f(\cF)$ in place of $q(\cF)$. This puts us in linear programming territory: by LP duality, a bound $q_f(\cF) \le \alpha$ ($\alpha \in [0,1]$) is essentially equivalent to existence of an "$\alpha$-spread" probability measure on $\cF$. In all applications of Theorem \ref{thm.PP} to date, what is actually used to upper bound $q(\cF)$ is an appropriately spread measure.\footnote{The problem of constructing well-spread measure is getting growing attention now: see e.g. \cite{Kang} for a start.} So all these applications actually follow from the weaker Talagrand version.

We close this article with a very interesting conjecture of Talagrand \cite{Talagrand} that would imply the equivalence of Theorem~\ref{thm.PP} and its fractional version:

\begin{conjecture}
    There exists a constant $K$ such that for any finite set $X$ and increasing property $\cF \sub 2^X,$
    \[q_f(\mathcal F) \le Kq(\mathcal F).\]
\end{conjecture}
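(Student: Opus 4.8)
\medskip
\noindent\textbf{A proposed line of attack.} The plan is to read this conjecture as a \emph{rounding} statement. As noted above, by LP duality a bound $q_f(\cF)\le\alpha$ amounts (up to a constant factor in $\alpha$) to the existence of an \emph{$\alpha$-spread} probability measure $\nu$ on the minimal elements of $\cF$, i.e.\ one with $\nu(\langle S\rangle)\le\alpha^{|S|}$ for all $S\sub X$, where $\langle S\rangle=\{A:A\supseteq S\}$. Thus it suffices to prove: from any such $\nu$ one can extract an (integral) cover $\cG$ of $\cF$ with $\sum_{S\in\cG}(\alpha/K)^{|S|}\le 1/2$ for an absolute constant $K$ --- which is exactly the assertion $q(\cF)\ge\alpha/K$. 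In words, every fractional cover should be roundable to an integral one at the cost of only a constant factor in the parameter. Before anything else I would record the cheap partial result already in hand: combining $q_f(\cF)\le p_c(\cF)$ with Theorem~\ref{thm.PP} gives $q_f(\cF)\le Kq(\cF)\log\ell(\cF)$ for free, so the entire content of the conjecture is the removal of the $\log\ell(\cF)$ factor.

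The construction of $\cG$ I would attempt is scale by scale. At size $s$, put into $\cG$ every $S$ with $|S|=s$ that is \emph{heavy}, meaning $\nu(\langle S\rangle)\ge\delta_s$ for a threshold $\delta_s$ of order $(C\alpha\ell/s)^s$. Since $\sum_{|S|=s}\nu(\langle S\rangle)=\sum_A\binom{|A|}{s}\nu(A)\le\binom{\ell}{s}$, there are at most $\binom{\ell}{s}/\delta_s$ heavy sets of size $s$, so their total contribution $\sum(\alpha/K)^{|S|}$ is at most $\binom{\ell}{s}(s/(CK\ell))^s\le(e/(CK))^s$, which is geometric in $s$ once $CK$ is a large enough absolute constant, and summing over $s\ge1$ keeps the cost below $1/2$. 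The real gap is that this $\cG$ need not \emph{cover} $\cF$: a minimal element $A$ every subset of which is \emph{light} is missed. The residue of such $A$'s must be handled recursively --- conditioning $\nu$ on the structure already committed to and reapplying the construction.

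\medskip
\noindent\textbf{Where the difficulty lies.} The crux is exactly this recursion. Conditioning an $\alpha$-spread measure on a fixed core $T$ (restrict to $A\supseteq T$, delete $T$) can badly inflate the spread constant, because the renormalizing mass $\nu(\langle T\rangle)$ may be as small as $\alpha^{|T|}$; this is the same obstruction that makes the Erd\H{o}s--Rado sunflower problem hard, and it is where a genuinely new idea is needed. My realistic expectation is that one must either (i) design an amortized potential that charges the recursion so that the accumulated loss over all scales is $O(1)$ rather than $O(\log\ell)$, in the spirit of the refinement of \cite{ALWZ} that replaced $\ell^{\ell}$ by $(\log\ell)^{\ell}$ for sunflowers; or (ii) prove a self-improving inequality of the shape $q_f(\cF)\lesssim q(\cF)\log\!\big(q_f(\cF)/q(\cF)\big)$, which already forces $q_f(\cF)/q(\cF)=O(1)$ and so would close the conjecture, and which one would hope to obtain by rounding more carefully when $q_f$ and $q$ are already close. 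Either route hinges on controlling spreadness under conditioning, for which there is at present no satisfactory general tool.
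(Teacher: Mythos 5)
This statement is presented in the paper as an \emph{open conjecture} of Talagrand; the paper offers no proof of it, and to my knowledge none exists, so there is no ``paper's own proof'' to compare against. Your write-up is honestly framed as a sketch rather than a proof, and your diagnosis of the bottleneck --- that conditioning a spread measure on a committed core can destroy spreadness, the same obstruction that makes the sunflower problem hard --- is accurate and correctly locates where a genuinely new idea would be required. Your opening observation, that chaining $q(\cF)\le q_f(\cF)\le p_c(\cF)$ with Theorem~\ref{thm.PP} already yields $q_f(\cF)\le Kq(\cF)\log\ell(\cF)$, is correct and is indeed the natural baseline: the whole content of the conjecture is the removal of the $\log\ell$ factor.

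Beyond the recursion gap you already concede, let me flag a concrete problem in the heavy/light step itself. By $\alpha$-spreadness, $\nu(\langle S\rangle)\le\alpha^{|S|}$ for every $S$, whereas your heaviness threshold is $\delta_s\asymp(C\alpha\ell/s)^s$. A set of size $s$ can therefore be heavy only if $(C\ell/s)^s\le 1$, i.e.\ only for $s\gtrsim C\ell$; at every smaller scale $\cG$ is forced to be empty, and the cheap cost you compute is cheap precisely because the collection contains (essentially) nothing. Lowering the threshold so that small scales are populated (say $\delta_s=(\alpha/C)^s$) turns the cost estimate into $\binom{\ell}{s}(C/K)^s\lesssim(eC\ell/(Ks))^s$, which is summable only if $K\gtrsim\ell$ --- weaker even than the $\log\ell$ bound you started from. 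This tension, between a threshold low enough to capture sets and high enough to keep the cover cheap, is not a calibration detail that can be deferred to the recursion; it is the same difficulty in another guise, and your construction as written pushes the entire burden onto the ``residue'' step for which you do not have a tool. Of your two proposed escapes, the self-improving inequality $q_f(\cF)\lesssim q(\cF)\log\bigl(q_f(\cF)/q(\cF)\bigr)$ seems the more plausible intermediate target (it would close the conjecture if proved), but as you say, neither route is currently executable, and the statement should continue to be treated as open.
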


\nin \textit{Acknowledgement.} The author is grateful to Jeff Kahn for his helpful comments.

\end{document}